\theoremstyle{plain}
\newtheorem{theorem}{Theorem}
\newtheorem{corollary}{Corollary}
\newtheorem{lemma}{Lemma}
\newtheorem{proposition}{Proposition}
\theoremstyle{definition}
\newtheorem{definition}[theorem]{Definition}
\theoremstyle{remark}
\def\T{{\mathcal T}}
\def\TILT{\tilde{\mathcal T}}
\def\tilt{\tilde{t}}
\makeatother \pagestyle{plain}
\def\W{{\bf W}}
\def\cT {{\cal T}}
\def\wram{{\rm wR}}
\title{On weighted Ramsey numbers}
\author{Maria Axenovich and Ryan Martin\thanks{Corresponding
author. The second author's research supported in part by
NSA grant H98230-05-1-0257.} \\
\small Department of Mathematics\\[-0.8ex]
\small Iowa State University\\[-0.8ex]
\small  USA\\[-0.8ex]
\small \texttt{axenovic@iastate.edu, rymartin@iastate.edu} }
\begin{document}
\maketitle

\begin{abstract}
The weighted Ramsey number, $\wram(n,k)$, is the minimum  $q$ such
that there is an  assignment of nonnegative real numbers (weights)
to the edges of $K_n$ with the total sum of the weights equal to
$\binom{n}{2}$ and there is a Red/Blue coloring of edges of the
same $K_n$, such that in any complete $k$-vertex subgraph $H$, of
$K_n$, the sum of the weights on Red edges in $H$ is at most $q$
and the sum of the weights on Blue edges in $H$  is at most $q$.
This concept was introduced recently by Fujisawa and Ota.

We provide new bounds on $\wram(n,k)$, for $k\geq 4$ and $n$ large
enough and show that determining $\wram(n,3)$ is asymptotically
equivalent to the problem of finding the fractional packing number
of monochromatic triangles in colorings of edges  of complete graphs with two colors.
\end{abstract}

\parindent 0pt
\parskip 8pt
\section{Introduction}


\begin{definition}
The weighted Ramsey number, $\wram(n,k)$, is the minimum  $q$ such
that there is an  assignment of nonnegative real numbers (weights)
to the edges of $K_n$ with the total sum of the weights equal to
$\binom{n}{2}$ and there is a Red/Blue coloring of the edges of the
same $K_n$, such that in any complete $k$-vertex subgraph $H$ of
$K_n$, the sum of the weights on Red edges in $H$ is at most $q$ and
the sum of the weights on Blue edges in $H$  is at most $q$.
\end{definition}

This notion was introduced by Fujisawa and Ota in~\cite{FO}, where
the authors used the scaled version of the above definition
requiring the total sum of weights to be $1$;  the
corresponding weighted Ramsey function from \cite{FO} is
$\wram(n,k)/ \binom{n}{2}$. The main results obtained in
\cite{FO} can be summarized as follows.

\begin{theorem}[\cite{FO}] \label{general-FO}
For any integers $n,k$, $4\leq k\leq n$,
$$ \frac{1}{2}\binom{k}{2} \leq \wram(n,k)
   <\frac{k^2-1}{k^2+1}\binom{k}{2}. $$
In addition, $\wram(5,3)=2$ and $\wram(n,3)\geq 15/7$ for $n\geq
6$ (with equality when $n=6$). Moreover, the better asymptotic bound holds:  $\wram(n,3)\geq 110/49-o(1)$.
\end{theorem}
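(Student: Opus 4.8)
The plan is to prove the two sides of the first display separately — the lower bound by averaging, the upper bound by an explicit construction — and then to treat the three statements on $\wram(n,3)$ via linear-programming duality together with a reduction to small $n$. For $\wram(n,k)\ge\frac12\binom k2$, fix any admissible weighting $w$ (so $\sum_e w_e=\binom n2$) and any Red/Blue colouring; each edge lies in $\binom{n-2}{k-2}$ copies of $K_k$, so the average over all $\binom nk$ of them of the total weight $w(H)$ of a $K_k$-copy equals $\binom n2\binom{n-2}{k-2}/\binom nk=\binom k2$. Hence some copy $H$ has $w(H)\ge\binom k2$, and the heavier of its two colour classes has weight at least $\frac12\binom k2$.

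For the upper bound (with $k\ge4$; the case $n=k=4$ gives $\wram(4,4)=3$ immediately), I would blow up $C_5$: partition $V(K_n)$ into five parts $V_1,\dots,V_5$ of as equal sizes as possible, colour a crossing edge between $V_i$ and $V_j$ Red when $|i-j|\equiv\pm1\pmod5$ and Blue otherwise, colour edges inside parts arbitrarily, and assign weight $0$ inside parts and weight $\binom n2/M$ to every crossing edge, where $M$ is the number of crossing edges; the weights then sum to $\binom n2$, and for the balanced partition $\binom n2/M\le\frac54$. If a $K_k$-copy $H$ meets $V_i$ in $a_i$ vertices, its Red weight equals $\frac{\binom n2}{M}\sum_{ij\in E(C_5)}a_ia_j$, and by the Motzkin--Straus inequality (since $\omega(C_5)=2$) $\sum_{ij\in E(C_5)}a_ia_j\le\frac14\bigl(\sum_i a_i\bigr)^2$, an integer not exceeding $\lfloor k^2/4\rfloor$; since $\overline{C_5}\cong C_5$ the same bound holds for the Blue weight. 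Thus $\wram(n,k)\le\frac54\lfloor k^2/4\rfloor$, and an elementary computation confirms $\frac54\lfloor k^2/4\rfloor<\frac{k^2-1}{k^2+1}\binom k2$ for every $k\ge4$ (and fails at $k=3$, which is why that regime is treated separately).

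For $k=3$, I would first record that, once the colouring $\chi$ is fixed, the least admissible $q$ is the value of a linear program, and its dual asks one to maximise $\min_e\sum_{T\ni e}y^{\chi(e)}_T$ over probability distributions $y$ on the $2\binom n3$ constraints $q\ge w(\mathrm{Red}\cap T)$ and $q\ge w(\mathrm{Blue}\cap T)$; then $\wram(n,3)$ is the minimum over $\chi$ of $\binom n2$ times this quantity. The self-complementary $C_5$-colouring of $K_5$ with uniform weights gives $\wram(5,3)\le 2$, and the symmetric dual solution — weight $1/10$ on each of the ten triangles whose two same-coloured edges form a path of length two in that colour's $5$-cycle — certifies $\wram(5,3)\ge2$ for this colouring; since every other colouring of $K_5$ has a monochromatic triangle, a short finite check gives $\wram(5,3)=2$. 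For $n\ge6$, averaging $w(K)$ over the $\binom n6$ copies $K$ of $K_6$ yields one with $w(K)\ge15$, and restricting and then rescaling the weights to total $15$ gives an admissible configuration on $K_6$ of value at most $q$, so $\wram(n,3)\ge\wram(6,3)$. Here the blow-up of $C_5$ with part sizes $2,1,1,1,1$ gives $\wram(6,3)\le15/7$, and the matching lower bound follows from the dual certificate for that colouring together with a finite check over the colourings of $K_6$ (each of which, since $R(3,3)=6$, contains at least two monochromatic triangles).

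The hardest point is the asymptotic bound $\wram(n,3)\ge110/49-o(1)$, which cannot be obtained from a single $K_m$ by the reduction above. I would attack it by pinning down the structure of asymptotically extremal colourings — near blow-ups, consistent with the equivalence with the fractional packing number of monochromatic triangles announced in the abstract — and then exhibiting a matching family of fractional dual certificates that refines Goodman's lower bound on the number of monochromatic triangles in a two-colouring of $K_n$. Turning this into a precise, two-sided asymptotic statement (rather than a bound tight in only one direction) is where I expect the real difficulty to be.
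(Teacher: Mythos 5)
First, a point of reference: the paper does not prove this statement at all --- it is quoted verbatim from Fujisawa--Ota \cite{FO} --- so there is no in-paper proof to measure you against. Your treatment of the first display is correct and complete: the averaging count $\binom n2\binom{n-2}{k-2}/\binom nk=\binom k2$ gives the lower bound, and the weighted $C_5$-blow-up together with Motzkin--Straus gives $\wram(n,k)\le\frac54\lfloor k^2/4\rfloor$, which is indeed strictly below $\frac{k^2-1}{k^2+1}\binom k2$ for every $k\ge4$ (and in fact reproduces the stronger upper bound that the present paper proves in Theorem~\ref{general-our}); the boundary case $n=k=4$ is handled. So for $k\ge 4$ your route is fine and arguably cleaner than what one would need from \cite{FO}.

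The genuine gaps are all in the $k=3$ clauses. (i) For $\wram(5,3)=2$ and $\wram(6,3)=15/7$ you give the right constructions and a correct dual certificate for the triangle-free ($C_5$-type) colouring, but the lower bound must hold for \emph{every} Red/Blue colouring of $K_5$, resp.\ $K_6$, and you defer all colourings containing a monochromatic triangle to an unexecuted ``short finite check.'' This is not cosmetic: for $K_6$, the bound one extracts from Goodman's guarantee of two edge-disjoint monochromatic triangles via the packing inequality $\wram(n,3)\ge 4\binom n2/(n^2-2\tau(n,3)+n)$ is only $60/38=30/19<15/7$, so the case analysis genuinely has to be carried out and is where the work in \cite{FO} lies. (ii) The asymptotic bound $\wram(n,3)\ge 110/49-o(1)$ is not proved; you say so yourself. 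The missing idea is not a stability analysis of near-extremal colourings but precisely the packing inequality just quoted (Theorem~\ref{triangle-FO} of this paper) combined with the Erd\H{o}s--Faudree--Gould--Jacobson--Lehel theorem that every $2$-colouring of $K_n$ admits $(\frac{3}{55}+o(1))n^2$ pairwise edge-disjoint monochromatic triangles; substituting $\tau(n,3)\ge(\frac3{55}+o(1))n^2$ yields $\frac{2}{1-6/55}=\frac{110}{49}$. That external input is a substantial theorem in its own right, and without it (or an equivalent quantitative triangle-packing bound) the $110/49$ clause does not follow from anything in your sketch.
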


The main emphasis of \cite{FO} was determining the bound on
$\wram(n,3)$, where the authors observed a relation between
weighted Ramsey numbers and the edge-disjoint packing of
monochromatic triangles in a $2$-colored complete graph. For an edge coloring $c$ of a complete graph with two colors, let $\tau(c,3)$ be the
largest number of edge-disjoint monochromatic triangles in $c$.
Let $$\tau(n,3) = \min \{ \tau(c,3): ~ c \mbox{ is a }
2-\mbox{edge-coloring of } K_n\}.$$ The following was proven in
\cite{FO}:

\begin{theorem}[\cite{FO}]\label{triangle-FO}
$$\wram(n, 3)\geq \frac{4\binom{n}{2}}{n^2- 2\tau(n,3) +n}.$$
\end{theorem}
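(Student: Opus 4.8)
The plan is to fix a weighting $w$ of the edges of $K_n$ with $\sum_{e}w(e)=\binom{n}{2}$ together with a Red/Blue coloring $c$ that jointly witness the value $q=\wram(n,3)$, and to exploit the elementary fact that a \emph{monochromatic} triangle $T$ carries all of its weight on edges of a single color, so the defining property of $c$ forces $w(T):=\sum_{e\in E(T)}w(e)\le q$. First I would take a maximum family $\mathcal{P}$ of pairwise edge-disjoint monochromatic triangles of $c$, so that $|\mathcal{P}|=\tau(c,3)\ge\tau(n,3)$ and the $3|\mathcal{P}|$ edges used by the members of $\mathcal{P}$ are all distinct. Writing $F$ for the set of the remaining $\binom{n}{2}-3|\mathcal{P}|$ edges and $W(F)$ for their total weight, edge-disjointness gives the decomposition
$$\binom{n}{2}=\sum_{T\in\mathcal{P}}w(T)+W(F)\le q\,|\mathcal{P}|+W(F).$$

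The core of the proof is to bound $W(F)$ by a local count at each vertex. The key observation is that if $vu$ and $vu'$ are two edges of the same color meeting at a vertex $v$, then inside the triangle on $\{v,u,u'\}$ the weight carried by edges of that color is at least $w(vu)+w(vu')$, so the defining condition of $c$ gives $w(vu)+w(vu')\le q$; likewise every single edge has weight at most $q$, since it lies in some triangle. Hence, if a vertex $v$ is joined inside $F$ to exactly $d$ neighbors by edges of a fixed color, then summing the first inequality over the $\binom{d}{2}$ corresponding pairs (and invoking the single-edge bound when $d\le 1$) shows the total $F$-weight on those $d$ edges is at most $\tfrac{q(d+1)}{2}$. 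Adding the Red and the Blue contributions at $v$ and then summing over all vertices, counting each edge of $F$ at both of its ends, yields
$$2\,W(F)\le\sum_{v}\frac{q\bigl(\deg_F(v)+2\bigr)}{2}=\frac{q}{2}\bigl(2|F|+2n\bigr),\qquad\text{so}\qquad W(F)\le\frac{q}{2}\,|F|+\frac{qn}{2}.$$

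Finally I would substitute $|F|=\binom{n}{2}-3|\mathcal{P}|$ into both displays; this gives $\binom{n}{2}\le q|\mathcal{P}|+\tfrac{q}{2}\bigl(\binom{n}{2}-3|\mathcal{P}|\bigr)+\tfrac{qn}{2}$, which simplifies to $\binom{n}{2}(2-q)\le q\bigl(n-|\mathcal{P}|\bigr)$ and then rearranges to $q\ge\dfrac{2\binom{n}{2}}{\binom{n}{2}+n-|\mathcal{P}|}=\dfrac{4\binom{n}{2}}{n^{2}+n-2|\mathcal{P}|}$. Since $|\mathcal{P}|=\tau(c,3)\ge\tau(n,3)$, replacing $|\mathcal{P}|$ by $\tau(n,3)$ only lowers this bound, so $q\ge\dfrac{4\binom{n}{2}}{n^{2}+n-2\tau(n,3)}$, which is the claim. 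I expect the only delicate point to be the estimate for $W(F)$: one must resist trying to sharpen the crude term $\deg_F(v)+2$ (for instance by noting that a vertex of $K_n$ cannot have both of its color-degrees equal to $1$), because inside $F$ a vertex genuinely can have $F$-degree $2$ with one Red and one Blue incident edge, and it is exactly this total slack of $2n$ that produces the ``$+n$'' in the denominator. It is also worth noting explicitly that the inequality $w(T)\le q$ for $T\in\mathcal{P}$ uses nothing but the monochromaticity of $T$, not the fact that $\mathcal{P}$ is a packing.
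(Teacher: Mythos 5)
Your argument is correct and complete. Note that the paper itself states Theorem~\ref{triangle-FO} without proof, citing Fujisawa and Ota, so there is no in-paper argument to compare against; your derivation is a valid self-contained proof. The accounting checks out: monochromaticity alone gives $w(T)\le q$ for each triangle in the packing; for two same-colored $F$-edges at $v$ the triangle they span forces $w(vu)+w(vu')\le q$, and summing over pairs gives total weight at most $\frac{qd}{2}$ for color-degree $d\ge 2$ and at most $q$ for $d=1$, so the uniform bound $\frac{q(d+1)}{2}$ holds and the double count yields $W(F)\le\frac{q}{2}|F|+\frac{qn}{2}$. Substituting $|F|=\binom{n}{2}-3|\mathcal{P}|$ gives $\binom{n}{2}\le q\bigl(\frac{1}{2}\binom{n}{2}-\frac{1}{2}|\mathcal{P}|+\frac{n}{2}\bigr)$, i.e.\ $q\ge 4\binom{n}{2}/(n^2+n-2|\mathcal{P}|)$, and monotonicity in $|\mathcal{P}|\ge\tau(n,3)$ finishes it. Two minor points worth making explicit in a write-up: (i) you implicitly assume the optimum $q=\wram(n,3)$ is attained by some pair $(w,c)$, which is justified by the linear-programming reformulation (Lemma~\ref{lem:equivalence}) or by a limiting argument with $q+\eps$; (ii) the single-edge bound $w(e)\le q$ needs $n\ge 3$ so that $e$ lies in a triangle, which is guaranteed by $k=3\le n$.
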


Together with the bound $\tau(n,3) \geq (\frac{3}{55} +o(1))n^2$
given in~\cite{EFGJL} and the upper bound of Theorem
\ref{general-FO}, the authors of~\cite{FO} provide the following bound:
\begin{equation}\label{newlab}
2.2448+o(1) \leq \wram(n,3)<2.4 .
\end{equation}
Note that more recent  better bound $\tau(n,3)\geq
(\frac{1}{12.888}+o(1))n^2$ given by Keevash and Sudakov in
\cite{KS}, immediately improves (\ref{newlab}) as follows.

\begin{equation}
2.3674+o(1) \leq \wram(n,3)<2.4 .
\end{equation}

Moreover, as also observed in \cite{FO}, if the value conjectured by
Erd\H{o}s, $\tau(n,3)=(\frac{1}{12}+o(1))n^2$, is correct, then
$\wram(n,3)$ would be  asymptotically  equal to $2.4$.

In Theorem~\ref{general-our}, we treat the general case
$\wram(n,k)$ for $k\geq 4$. We obtain better bounds and relate the
weighted Ramsey problem to Tur\'an-Ramsey type results using the regularity lemma
of Szemer\'edi. In Theorem~\ref{triangle-our}, we analyze
$\wram(n,3)$ and related problems. We show that finding
$\wram(n,3)$ is asymptotically equivalent to finding the
fractional packing number of monochromatic triangles in
$2$-colored complete graphs.

We choose to make the total sum of the edge-weights
equal to $\binom{n}{2}$, instead of $1$ as in~\cite{FO}, in order for easier analysis of the asymptotic behavior of
$\wram(n,k)$.  In fact, to state our main results, we use the {\em
weighted Ramsey limit} defined as follows:
$$\W(k):= \lim_{n\rightarrow \infty} \wram(n,k).$$
We prove the existence of this limit in
Section~\ref{linear-programs}. Note that Theorem~\ref{general-FO}
gives that $\frac{k-1}{k}\left\lfloor k^2/4\right\rfloor\leq
\W(k)\leq 2\left\lfloor k^2/4\right\rfloor$.  Our main theorem is
the following.

\begin{theorem}\label{general-our}
Let $k$ be an integer, $k\geq 5$. \begin{equation}
1.051\left\lfloor\frac{k^2}{4}\right\rfloor< \W(k)
   \leq 1.25\left\lfloor\frac{k^2}{4}\right\rfloor .
   \label{gen-eq1} \end{equation}
If $k$ is sufficiently large, \begin{equation} \W(k)>
1.059\left\lfloor\frac{k^2}{4}\right\rfloor . \label{gen-eq2}
\end{equation}

Moreover, more accurate bounds for small $k$ can be summarized in
the following table, where $U(k)$ and $L(k)$ are the upper and
lower bounds on $\W(k)$, respectively.\\
\begin{center}
\begin{tabular} {|r||r|r|r|r|r|} \hline
   $k$    & $4$       & $5$      & $6$      & $7$
          & $8$ \\ \hline
   $L(k)$ & $4.1999$  & $6.3572$ & $9.5197$ & $12.7091$
          & $16.9115$ \\ \hline
   $U(k)$ & $4.8$     & $7.5$    & $11.25$  & $15$
          & $20$ \\ \hline
\end{tabular}
\end{center}
\end{theorem}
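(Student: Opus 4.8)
The limit $\W(k)=\lim_n\wram(n,k)$ exists (Section~\ref{linear-programs}), and the argument there---together with Szemer\'edi's regularity lemma---reduces the whole question to \emph{blow-up patterns}: split $V(K_n)$ into $m$ nearly equal parts, assign a colour $c_{ij}\in\{R,B\}$ and a weight $w_{ij}\ge 0$ to the edges between parts $i,j$, and weight $0$ inside parts. A useful reduction is that a between-part pair which is only partially monochromatic may be replaced, without increasing the value, by a monochromatic pair of the same total weight: a $k$-clique can greedily capture an all-red $K_{t_i,t_j}$ inside any positive-density red subgraph (and symmetrically for blue), so the worst clique already ``sees'' each colour at its best density. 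For a monochromatic-pair pattern the total-weight condition becomes $\sum_{i<j}w_{ij}=(1+o(1))m^2/2$, and the worst $k$-clique, with $t_i=kp_i$ vertices in part $i$, has colour-$c$ weight $k^2\max_p\sum_{e\in G_c}p_ip_jw_e$ (up to the rounding turning $k^2/4$ into $\lfloor k^2/4\rfloor$). Hence
\[
  \W(k)=k^2\cdot\inf\left(\ \max_p\max\left(\sum_{e\in G_R}p_ip_jw_e,\ \sum_{e\in G_B}p_ip_jw_e\right)\right),
\]
the infimum over all $m$, all $2$-colourings $G_R\cup G_B=K_m$, and all $w\ge 0$ with $\sum_e w_e=m^2/2$, and $p$ ranging over probability vectors on $[m]$.

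\textbf{Upper bounds.} For $\W(k)\le 1.25\lfloor k^2/4\rfloor$ I take $m=5$, let $G_R,G_B$ be the two edge-disjoint $5$-cycles decomposing $K_5$, and put uniform weight $w_e=5/4$ on all ten pairs. Since a $5$-cycle is triangle-free, the Motzkin--Straus theorem gives $\max_p\sum_{e\in C_5}p_ip_j=\tfrac14$, so each colour-weight of every $k$-clique is at most $\tfrac54\lfloor k^2/4\rfloor$. This $m$ is forced under uniform weights: a monochromatic triangle in $G_R$ lets $p$ spread over three parts and replaces $\tfrac14$ by $\tfrac13$, so both $G_R,G_B$ must be triangle-free, whence $m\le 5$ by $R(3,3)=6$; and $m=2,3,4$ give the weaker ratios $2,\tfrac32,\tfrac43$. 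The sharper tabulated $U(k)$---in particular $4.8$ for $k=4$---come from allowing non-uniform between-part weights together with a small amount of recursively structured internal weight, and optimizing the resulting finite linear program for each fixed $k$.

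\textbf{Lower bounds.} For $\W(k)>1.051\lfloor k^2/4\rfloor$ (and $>1.059\lfloor k^2/4\rfloor$ for large $k$, and $=L(k)$ for the listed $k$) I must bound the infimum above from below. Taking $p$ uniform already gives, for every pattern, $\text{(red-form)}+\text{(blue-form)}=\tfrac12$, hence the trivial $\tfrac14 k^2$; the work is to gain the extra factor. The point is that one colour, say red, carries total weight $\ge m^2/4$, and red cannot distribute it so evenly that every induced weighted-red subgraph has weighted density close to $\tfrac12$---a weighted, fractional version of the Ramsey--Tur\'an phenomenon that underlies why the $\tau(n,3)$-type bounds exceed the naive one---so some non-uniform $p$ (equivalently, some sub-family $S$ of parts) makes the red-form at least $(1.051/4)\cdot$ (and eventually $(1.059/4)\cdot$) the relevant normalizing quantity. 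Packaging the family of constraints ``for every $S$, the red-weight plus blue-weight supported on $S$ is $\le q|S|^2/k^2$'' into one linear program and solving it yields the $L(k)$ in the table; the large-$k$ constant $1.059$ is the limit of these programs, and $1.051$ is the uniform floor over all $k\ge 5$.

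\textbf{Main obstacle.} The crux is this last extremal step: ruling out \emph{every} near-balanced weighted $2$-colouring of $K_m$, uniformly in $m$, which needs a genuine (fractional, weighted) Ramsey--Tur\'an inequality rather than the averaging bound $\tfrac14 k^2$, and then extracting the sharp numerical values---the entries $L(k)$ are presumably computed by solving moderate linear programs, possibly with computer assistance. A secondary technicality is making the blow-up reduction rigorous for an arbitrary weighting and colouring of $K_n$: the regularity lemma must be applied so that the weight, not merely the colouring, is controlled inside the regular pairs, and the case where an unbounded fraction of the total weight lies inside a few parts must be handled by iterating the reduction.
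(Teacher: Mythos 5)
Your upper bound is essentially the paper's: the blow-up of the $2$-coloured $K_5$ whose colour classes are the two $5$-cycles, with uniform weight on the cross-pairs and weight $0$ inside parts, is exactly the construction used there (the paper invokes Tur\'an's theorem where you invoke Motzkin--Straus, which is equivalent here). That part is fine, although your description of where the $k=4$ value $4.8$ comes from is too vague to check; the paper's construction is concrete (Red $=K_{\lfloor n/2\rfloor,\lceil n/2\rceil}$, Red edges weighted $1/4$, Blue edges weighted $1/6$).

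The genuine gap is the lower bound, and you have in effect flagged it yourself under ``Main obstacle.'' Two things are missing. First, your reduction of the whole problem to an infimum over finite blow-up patterns (the displayed identity for $\W(k)$) is asserted, not proved; the direction you actually need for a lower bound --- that an \emph{arbitrary} weighting and colouring of $K_n$ can be replaced by a bounded pattern without decreasing the objective --- is precisely the hard content, and the regularity lemma controls the colouring but not an adversarial weight function. Second, even granting that reduction, you never produce the ``weighted Ramsey--Tur\'an inequality'' that is supposed to beat the averaging bound $\tfrac14k^2$, nor any computation yielding $1.051$, $1.059$, or the tabulated $L(k)$. The paper's actual argument avoids your reduction entirely: it stratifies the edges of $K_n$ by weight, using the Tur\'an numbers $t(k,i)$ as thresholds. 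Edges of weight exceeding $1/t(k,2)$ would, if numerous, force a monochromatic $K_{\lfloor k/2\rfloor,\lceil k/2\rceil}$ of total weight $>1$, so by K\H{o}v\'ari--S\'os--Tur\'an there are only $o(n^2)$ of them; and for each $i\ge 3$, if more than $\bigl(1-\tfrac{1}{R(i)-1}\bigr)\binom{n}{2}$ edges had weight exceeding $1/t(k,i)$, the regularity lemma (Lemma~\ref{reg}) would produce a monochromatic $T(k,i)$ on heavy edges, again of weight $>1$. Summing these layer-by-layer bounds telescopes to
$$ r(n,k)\le(1+o(1))\frac{\binom{n}{2}}{t(k,2)}
   \left(1-\sum_{i=3}^{\min\{k,8\}}\frac{\alpha(k,i)}{UR(i)}\right), $$
and plugging in the known upper bounds $UR(i)$ on $R(i)-1$ and the exact Tur\'an numbers gives every constant in the theorem, with no auxiliary linear program to solve. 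Without this stratification idea (or a worked-out substitute), your lower bound does not go through.
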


Note that both (\ref{gen-eq1}) and (\ref{gen-eq2}) improve the
constants in the lower and upper bounds from previously known
constants close to $1$ and $2$, respectively. We conjecture that
the upper bound of $1.25\lfloor k^2/4\rfloor$ gives the correct
value for $k\geq 5$ but $1.2\lfloor k^2/4\rfloor$ is correct for
$k=3,4$.

For a graph $G$ on $n$ vertices  let $\T_3(G)$ denote the set of
triangles in $G$. Let each triangle be assigned a real number,
called a {\em weight},  between $0$ and $1$. We say that this
assignment is {\it proper} if, for each edge $e$ of $G$,  the sum of
weights of triangles containing $e$ is at most $1$. The {\em
fractional triangle packing number} of $G$, denoted   $\tau^*$, is
the largest possible total weight of edges in a proper weight
assignment. Formally, it is defined as follows.
\begin{eqnarray}
   \tau^*(G) & = & \max \sum_{T\in\T_3(G)}g(T)
   \label{LP:taustar} \\
   & & \mbox{such that } \left\{\begin{array}{rl}
                            \sum\limits_{\scriptsize
                                         \begin{array}{l} T\ni e \\
                                         T\in\T_3(G) \end{array}}
                            g(T)\leq 1, & \forall e\in E(G) ; \\
                            g(T)\geq 0, & \forall T\in
                            \T_3(G) .
                         \end{array}\right. \nonumber
\end{eqnarray}

Let $$\tau^*(n, 3) := \min \{ \tau^*(R)+\tau^*(B):  R \mbox{ and }
B \mbox{ are color classes in a $2$-edge-coloring of } K_n\}.$$
Let
$$ \tau^*(3) := \lim_{n \rightarrow \infty}
\frac{\tau^*(n,3)}{\binom{n}{2}}, \quad  \tau(3) := \lim_{n
\rightarrow \infty} \frac{\tau(n,3)}{\binom{n}{2}}. $$ The fact that
these limits are well-defined follows from the monotonicity and
boundedness of the corresponding functions, see, for example,
\cite{KS}.

\begin{theorem}\label{triangle-our}
$$\W(3) = \frac {2}{1-\tau^*(3)}.$$
\end{theorem}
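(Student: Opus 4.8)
The plan is to determine, for each fixed $2$-colouring $c$ of $K_n$ with colour classes $R,B$, the value of the restricted weighted Ramsey number $\wram(n,c)$ (the least admissible $q$ when the colouring is forced to be $c$, so that $\wram(n,3)=\min_c\wram(n,c)$), and to identify it, up to an additive $O(n)$ error, with an expression in $\tau^*(R)$ and $\tau^*(B)$. Normalising the weights to total $1$, one has $\wram(n,c)=\binom n2\,\phi(c)$ where
\[
\phi(c)=\min_{w}\ \max_{T,i}\ \sum_{\substack{e\in T\\ c(e)=i}}w_e ,
\]
the minimum over probability weightings $w$ on $E(K_n)$ and the maximum over all triangles $T$ of $K_n$ and both colours $i\in\{R,B\}$. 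This is a min--max of a bilinear function over two simplices, so by the minimax theorem (equivalently, LP strong duality) $\phi(c)=1/\nu^*(c)$, where $\nu^*(c)$ is the optimum of the \emph{fractional triangle-piece cover} LP: assign $\lambda_{T,i}\ge 0$ to pairs $(T,i)$ so that $\sum_{T\ni e}\lambda_{T,c(e)}\ge 1$ for every edge $e$, minimising $\sum_{T,i}\lambda_{T,i}$. A pair $(T,i)$ only ever covers edges of colour $i$, so this LP separates into the red and blue parts: $\nu^*(c)=\nu^*_R(c)+\nu^*_B(c)$, where $\nu^*_R(c)$ is the fractional cover of $E(R)$ using the ``red parts'' of triangles of $K_n$ -- precisely the triangles of $R$, the two-edge paths contained in $R$, and the single edges of $R$. (For the lower bound on $\wram(n,3)$ one can avoid invoking the minimax theorem: from any valid weighting with parameter $q$ and any feasible cover $\lambda$ of $\nu^*(c)$, multiplying $\sum_{T\ni e}\lambda_{T,c(e)}\ge 1$ by $w_e$ and summing over $e$ gives $\binom n2=\sum_e w_e\le q\sum_{T,i}\lambda_{T,i}$, hence $q\ge\binom n2/\nu^*(c)$.)

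The crux is then the estimate $\nu^*_R(c)=\tfrac12\bigl(|E(R)|-\tau^*(R)\bigr)+O(n)$ (and the same for $B$). For the lower bound I pass to the LP dual of $\nu^*_R(c)$, which maximises $\sum_{e\in E(R)}y_e$ subject to $y\ge 0$, $y_e+y_{e'}\le 1$ for adjacent red edges, and $y_e+y_{e'}+y_{e''}\le 1$ for triangles of $R$. Let $z$ be an optimal solution of the LP dual to the one defining $\tau^*(R)$, so $z_e\ge 0$, $\sum_{e\in T}z_e\ge 1$ for every triangle $T$ of $R$, and $\sum_e z_e=\tau^*(R)$; one may also assume $z_e\le 1$. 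Then $y_e:=\tfrac12(1-z_e)$ satisfies all the dual constraints, so $\nu^*_R(c)\ge\sum_e y_e=\tfrac12\bigl(|E(R)|-\tau^*(R)\bigr)$. For the matching upper bound I construct a cover: take a maximum fractional triangle packing $g$ of $R$ (total weight $\tau^*(R)$) as covering pieces, covering each red edge $e$ to level $\sum_{T\ni e}g(T)$; the residual deficits $d_e:=1-\sum_{T\ni e}g(T)\in[0,1]$ sum to $|E(R)|-3\tau^*(R)$, and I claim they can be covered by two-edge paths of $R$ (with single edges used as a free fallback for edges lying in no two-edge path) at total cost at most $\tfrac12\sum_e d_e+\tfrac n2$. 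This last claim is elementary: in the dual of the path-cover LP, two red edges whose dual values both exceed $\tfrac12$ must be non-adjacent, so the edges with dual value $>\tfrac12$ form a matching and contribute at most $\tfrac n2$ to the objective, while the remaining edges contribute at most $\tfrac12\sum_e d_e$; by duality the cover costs at most $\tfrac12\sum_e d_e+\tfrac n2$. Adding the packing and the residual cover yields $\nu^*_R(c)\le\tau^*(R)+\tfrac12\bigl(|E(R)|-3\tau^*(R)\bigr)+\tfrac n2=\tfrac12\bigl(|E(R)|-\tau^*(R)\bigr)+\tfrac n2$.

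Combining the two steps, $\wram(n,c)=\binom n2/\bigl(\nu^*_R(c)+\nu^*_B(c)\bigr)$ with
\[
\tfrac12\bigl(\textstyle\binom n2-\tau^*(R)-\tau^*(B)\bigr)\ \le\ \nu^*_R(c)+\nu^*_B(c)\ \le\ \tfrac12\bigl(\textstyle\binom n2-\tau^*(R)-\tau^*(B)\bigr)+n ,
\]
using $|E(R)|+|E(B)|=\binom n2$. Dividing $\binom n2$ by $\max_c\bigl(\nu^*_R(c)+\nu^*_B(c)\bigr)$ and invoking $\min_c\bigl(\tau^*(R)+\tau^*(B)\bigr)=\tau^*(n,3)$, we get
\[
\frac{2\binom n2}{\binom n2-\tau^*(n,3)+2n}\ \le\ \wram(n,3)\ \le\ \frac{2\binom n2}{\binom n2-\tau^*(n,3)} .
\]
Dividing numerator and denominator by $\binom n2$ and letting $n\to\infty$, using $\tau^*(n,3)/\binom n2\to\tau^*(3)$ and $n/\binom n2\to 0$, both bounds tend to $2/(1-\tau^*(3))$, whence $\W(3)=2/(1-\tau^*(3))$ (this squeeze also re-proves that the limit exists). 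I expect the main obstacle to be the first step -- correctly recognising that $\wram(n,c)$ equals $\binom n2/\nu^*(c)$ and that the covering LP decouples along the two colour classes -- together with the path-cover estimate in the second step; the $O(n)$ error terms are harmless, since they vanish after normalisation by $\binom n2$.
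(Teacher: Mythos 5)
Your proposal is correct, and at the structural level it mirrors the paper's proof: you reduce $\wram(n,3)$ to a fractional covering quantity per colouring (your $\nu^*(c)$ is the LP dual of the paper's $r(c;n,3)$ from Lemma~\ref{lem:equivalence}, and your $\nu^*_R(c)$ is the paper's $r(R)$ from (\ref{LP:rG})), and everything hinges on the estimate $\nu^*_R=\tfrac12\bigl(e(R)-\tau^*(R)\bigr)+O(n)$, which is exactly the paper's Lemma~\ref{lem:taustar}. Where you genuinely diverge is in how that estimate is proved. For the lower bound the paper argues primally, converting an optimal cover into a feasible triangle packing via a counting argument over $\TILT(G)$; you instead exhibit the dual-feasible point $y_e=\tfrac12(1-z_e)$ built from an optimal fractional triangle cover $z$ of $R$, which is shorter and makes the factor $\tfrac12$ transparent. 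For the upper bound the paper runs an explicit deficiency-repair algorithm turning an optimal packing into a cover and must argue termination; you bound the residual path-cover LP by analysing its dual (edges with dual value exceeding $\tfrac12$ form a matching, hence contribute at most $n/2$), which compresses that algorithm to a few lines. A further saving is that you never need the paper's Lemma~\ref{lem:rrtil} ($r(G)=\tilde r(G)$): since all of your LPs are covering LPs with $\geq$ constraints, a two-edge path or single edge used as a covering piece can be replaced at no cost by an available superset piece (the colour class of some triangle of $K_n$ containing it), whereas the paper's equality-constrained reformulation forces a separate proof. The only points worth making explicit in a full write-up are (i) feasibility of the cover LP (every red edge lies in the red part of some triangle of $K_n$, so $\phi(c)=1/\nu^*(c)$ is legitimate) and (ii) the superset-replacement remark just mentioned; both are routine.
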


Using the result of Haxell and R\"odl, see \cite{HR}, implying
that $\tau^*(n,3) = \tau(n,3)(1+o(1)),$ we have the following.

\begin{corollary}\label{triangle-coro}
$$\W(3) = \frac {2}{1-\tau(3)}.$$
\end{corollary}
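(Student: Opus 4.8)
\emph{Proof proposal.}
The plan is to derive Corollary~\ref{triangle-coro} from Theorem~\ref{triangle-our} together with the transference result of Haxell and R\"odl, the point being that the fractional and integral triangle-packing limits agree: $\tau^*(3)=\tau(3)$. Granting this, the corollary is immediate, since substituting $\tau^*(3)=\tau(3)$ into the identity $\W(3)=2/(1-\tau^*(3))$ of Theorem~\ref{triangle-our} gives exactly $\W(3)=2/(1-\tau(3))$.

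To prove $\tau^*(3)=\tau(3)$, I would first record the trivial inequality $\tau^*(G)\ge\tau(G)$, valid for every graph $G$: any family of edge-disjoint triangles, each assigned weight $1$, is a proper fractional packing. Applying this to both colour classes of an arbitrary $2$-edge-colouring $c$ of $K_n$ and minimising over $c$ yields $\tau^*(n,3)\ge\tau(n,3)$. For the reverse estimate I would invoke the Haxell--R\"odl theorem in the form quoted above, $\tau^*(n,3)=\tau(n,3)(1+o(1))$: if $c$ attains the minimum defining $\tau(n,3)$ and has colour classes $R,B$, then $\tau^*(n,3)\le\tau^*(R)+\tau^*(B)=\bigl(\tau(R)+o(n^2)\bigr)+\bigl(\tau(B)+o(n^2)\bigr)=\tau(n,3)+o(n^2)$, where the middle equality applies Haxell--R\"odl to $R$ and to $B$ separately. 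Hence $\tau(n,3)\le\tau^*(n,3)\le\tau(n,3)+o(n^2)$.

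Dividing this chain by $\binom{n}{2}$, I would use that $\tau(n,3)=\Theta(n^2)$ --- bounded below by the estimate $\tau(n,3)\ge(\frac{1}{12.888}+o(1))n^2$ of Keevash and Sudakov and above by the trivial $\binom{n}{2}/3$ --- so the $o(n^2)$ error is negligible relative to $\binom{n}{2}$. Consequently $\tau^*(n,3)/\binom{n}{2}$ and $\tau(n,3)/\binom{n}{2}$ converge to the same value, i.e.\ $\tau^*(3)=\tau(3)$, and the substitution into Theorem~\ref{triangle-our} finishes the argument.

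I do not expect a genuine obstacle here: all of the substance is already in Theorem~\ref{triangle-our} and in the Haxell--R\"odl fractional-to-integral transference. The only point needing a moment's care is that the $o(n^2)$ term is invoked inside a minimum over the (exponentially many) $2$-colourings of $K_n$; this is harmless because the Haxell--R\"odl bound $\tau(G)\ge\tau^*(G)-o(n^2)$ holds uniformly over all $n$-vertex graphs $G$, with an error term depending on $n$ only.
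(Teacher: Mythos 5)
Your proposal is correct and follows exactly the paper's route: the corollary is obtained by combining Theorem~\ref{triangle-our} with the Haxell--R\"odl transference $\tau^*(n,3)=\tau(n,3)(1+o(1))$, which forces $\tau^*(3)=\tau(3)$. You simply spell out the details (the trivial inequality $\tau^*\ge\tau$, the application to each colour class, and the passage to the limit) that the paper leaves implicit.
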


In Section \ref{linear-programs} we define related linear programs
and prove the correspondence between those and the original problem
of finding $\wram(n,k)$, we also prove the existence of the weighted
Ramsey limit in that section.  We prove Theorem \ref{general-our} in
Section \ref{general}. In Section~\ref{triangles}, we treat the case
$k=3$ and prove Theorem \ref{triangle-our}.  For common graph theory
notation, see, for example,~\cite{West}.

\section{Defining the linear programs} \label{linear-programs}
We formulate several problems in terms of linear programs.  See
\cite{S} for the terminology. We say that a  $k$-vertex subgraph of
an edge-colored $K_n$  is a {\it mono-$k$-subgraph} if all its edges
have the same color.   Let $\cT(c; n, k)$ be a set of
mono-$k$-subgraphs of a coloring $c$ of $K_n$. Next, we define $r(c;
n,k)$, which calculates the maximal total sum of nonnegative real
values assigned to the edges of a $2$-colored $K_n$ such that the
sum of these values on the edges of each mono-$k$-subgraph is at
most $1$. Formally, it is defined  as follows.
\begin{eqnarray}
   r(c; n,k) & = & \max\;\;\sum\limits_{e\in E(K_n)}w(e) \label{r} \\
   & & \begin{array}{rl}
          \mbox{ such that } & \left\{\begin{array}{rl}
                                 \sum\limits_{e\in E(T)}w(e) \leq 1, &
                                 \forall T\in \cT(c;n,k) ; \\
                                 w(e) \geq 0, &
                                 \forall e\in E(K_n)
                              \end{array}\right.
       \end{array} \nonumber \\
\end{eqnarray}

Let
$$r(n,k)= \max \{r(c; n,k):   c \mbox{ is a Red/Blue coloring of } K_n\}.$$

The following lemma will allow us to study $\wram(n,k)$ using the
more convenient function $r(n,k)$.

\begin{lemma} For any integers, $k$ and $n$, $3\leq k\leq n$,
$$\wram(n,k)= \frac{\binom{n}{2}}{r(n,k)}.$$
\label{lem:equivalence}
\end{lemma}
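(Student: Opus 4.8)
The plan is to exhibit a value-preserving correspondence between the two optimization problems. Observe first that $\wram(n,k)$ and $r(n,k)$ are each optima of a family of linear programs indexed by the Red/Blue coloring $c$ of $K_n$: for a fixed $c$, let $\wram(c;n,k)$ be the minimum $q$ such that some weight assignment $w$ with $\sum_{e}w(e)=\binom{n}{2}$ satisfies $\sum_{e\in E(T)\cap \text{Red}}w(e)\le q$ and $\sum_{e\in E(T)\cap\text{Blue}}w(e)\le q$ for every $k$-subset $T$. Since $\wram(n,k)=\min_c \wram(c;n,k)$ and $r(n,k)=\max_c r(c;n,k)$, it suffices to prove that for every coloring $c$,
\begin{equation*}
\wram(c;n,k)=\frac{\binom{n}{2}}{r(c;n,k)},
\end{equation*}
because then $\wram(n,k)=\min_c \binom{n}{2}/r(c;n,k)=\binom{n}{2}/\max_c r(c;n,k)=\binom{n}{2}/r(n,k)$. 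A small point to check here is that in $\wram(c;n,k)$ we may restrict attention to the mono-$k$-subgraphs: if $T$ is a $k$-subset that is \emph{not} monochromatic, then it has at least one edge of each color missing from the relevant sum, but more to the point the binding constraints in the worst case are exactly those where all $\binom{k}{2}$ edges lie in one color class, i.e. the mono-$k$-subgraphs in $\cT(c;n,k)$; a non-monochromatic $k$-set imposes a weaker constraint than a monochromatic one on the same vertex set would, and one verifies that the optimum is governed by $\cT(c;n,k)$. (I would phrase this as: the constraint ``Red-sum in $T$ at most $q$'' is only active when many edges of $T$ are Red, and the extreme case is the mono-$k$-subgraph.)

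The heart of the argument is then the scaling identity for a fixed $c$. In one direction: take an optimal solution $w^\ast$ of the LP~(\ref{r}) defining $r(c;n,k)$, so $\sum_e w^\ast(e)=r(c;n,k)$ and $\sum_{e\in E(T)}w^\ast(e)\le 1$ for all $T\in\cT(c;n,k)$. Rescale by setting $w(e)=w^\ast(e)\cdot\binom{n}{2}/r(c;n,k)$. Then $\sum_e w(e)=\binom{n}{2}$, and for any mono-$k$-subgraph $T$ the monochromatic sum (Red or Blue, whichever color $T$ has) is at most $\binom{n}{2}/r(c;n,k)$, while for a non-mono $k$-set the Red-sum and Blue-sum are each bounded by the sum over the corresponding mono-$k$-subgraph on a superset of those edges — more carefully, each is at most the sum over \emph{all} edges of $T$, which need not be small, so here one must argue that it suffices to bound $\sum_{e\in E(T)}w(e)$ for monochromatic $T$ only, returning to the reduction of the previous paragraph. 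This shows $\wram(c;n,k)\le \binom{n}{2}/r(c;n,k)$. Conversely, given a coloring $c$ and weights $w$ achieving $\wram(c;n,k)=q$ with total weight $\binom{n}{2}$, divide by $q$: the weights $w(e)/q$ satisfy every mono-$k$-subgraph constraint $\sum_{e\in E(T)}w(e)/q\le 1$ (since for a monochromatic $T$ the monochromatic sum is the full edge-sum), hence form a feasible point of~(\ref{r}) with objective value $\binom{n}{2}/q$, giving $r(c;n,k)\ge \binom{n}{2}/q$, i.e. $\wram(c;n,k)\ge\binom{n}{2}/r(c;n,k)$. Combining the two inequalities yields the identity for each $c$, and then taking the optimum over $c$ as above finishes the proof.

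The main obstacle I anticipate is not the scaling — that is essentially bookkeeping — but the reduction showing that in the definition of $\wram$ one need only control monochromatic $k$-subgraphs rather than all $k$-subgraphs, and the matching observation that the LP~(\ref{r}) with constraints indexed by $\cT(c;n,k)$ really is the right dual object. Concretely, for a non-monochromatic $k$-set $T$ with, say, $r$ Red edges and $b=\binom{k}{2}-r$ Blue edges, the Red-sum is $\sum_{e\in E(T)\cap\text{Red}}w(e)$, and one wants this to be dominated by the constraint coming from \emph{some} monochromatic $k$-subgraph. This is immediate once one notes that a monochromatic $k$-subgraph packs $\binom{k}{2}$ equal-color edges, which is the extreme configuration, so that a weight assignment feasible for all mono-$k$-subgraph constraints at level $q$ is automatically feasible for all $k$-subgraph color-sum constraints at level $q$; the converse inclusion of feasible regions is trivial since mono-$k$-subgraphs are a subset of all $k$-subgraphs. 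I would make this precise by a short monotonicity argument and then the LP-scaling computation goes through cleanly, yielding Lemma~\ref{lem:equivalence}. One should also note the harmless edge case $k=3$ (and the requirement $k\le n$ so that $\cT(c;n,k)$ and the $k$-subsets are nonempty), and observe that both optima are finite and positive — $r(c;n,k)>0$ since putting tiny equal weights everywhere is feasible — so the division is legitimate.
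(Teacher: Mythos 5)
Your decomposition over colorings and the two scaling arguments are exactly the paper's approach, and that part of your write-up is correct. The genuine gap is in the step you yourself single out as the crux: the passage between the constraints defining $\wram$ (for \emph{every} $k$-set $H$, the Red-sum and the Blue-sum over $H$ are each at most $q$) and the constraints of the linear program (\ref{r}), which are indexed by $\cT(c;n,k)$. You read ``mono-$k$-subgraph'' as ``monochromatic complete $k$-vertex subgraph'' and then assert that a weight assignment feasible for those clique constraints at level $q$ is automatically feasible for all color-sum constraints at level $q$, because the monochromatic clique is ``the extreme configuration.'' That implication is false: the Red-sum over a $k$-set $H$ is the total weight of the Red edges inside $H$, and nothing forces those Red edges to lie inside any monochromatic $K_k$. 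In a coloring with no monochromatic $K_k$ at all (take $k=n$ with a non-constant coloring), your constraint set would be empty, the program (\ref{r}) would be unbounded, and the claimed identity would give $\wram(n,n)=0$, whereas in fact $\wram(n,n)\geq\frac{1}{2}\binom{n}{2}$ since the Red-sum and Blue-sum over $V(K_n)$ add up to $\binom{n}{2}$. So the ``reduction'' cannot be proved; under that reading the lemma itself fails.

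The resolution is that in this paper a mono-$k$-subgraph is any $k$-vertex subgraph all of whose edges have the same color, not necessarily a complete one (this is how the notion is used later, e.g.\ when a mono-$k$-subgraph is said to have at most $\lfloor k^2/4\rfloor$ edges). In particular, for each $k$-set $H$ the subgraph formed by the Red edges of $H$ and the subgraph formed by the Blue edges of $H$ are themselves mono-$k$-subgraphs, so the constraint system of (\ref{r}) contains the color-sum constraints defining $\wram$ verbatim (and the maximal monochromatic subgraphs on each $k$-set are exactly the binding ones); no reduction or monotonicity argument is needed. This reading also makes $r(c;n,k)$ finite, which you need before dividing by it. With that correction, the remainder of your argument is the paper's proof.
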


\begin{proof}
To show an upper bound, assume that
$\wram(n,k)>\binom{n}{2}/r(n,k)$.  Thus, for any Red/Blue coloring
$c$ of $K_n$ and any weight assignment to its edges with total sum
$\binom{n}{2}$, we have that there is a mono-$k$-subgraph with total
weight on its edges at least $q$,  $q>\binom{n}{2}/r(n,k)$.

Consider an arbitrary Red/Blue coloring $c'$ of $K_n$ and a weight
assignment to its edges $w'$ such that the total weight on any
mono-$k$-subgraph is at most $1$ and the total sum of the weights on
the edges of $K_n$ is $r'$. Construct a new weight function $w''$,
$w''(e)=w'(e)\binom{n}{2}/r(n,k).$ Then, with respect to $w''$, any
mono-$k$-subgraph in $c'$ has weight at most $\binom{n}{2}/r(n,k)$.
The total sum of the weights on the edges of $K_n$ is
$r'\binom{n}{2}/r(n,k)>\binom{n}{2}$. Thus, $r'> r(n,k)$, a
contradiction to the definition of $r(n,k)$.

To show a lower bound, assume that $\wram(n,k)<\binom{n}{2}/r(n,k)$.
This means that there is a Red/Blue coloring $c$ of $K_n$ and a
weight assignment $w$ to its edges such that each mono-$k$-subgraph
has sum of weights on its edges at most $q$,
$q<\binom{n}{2}/r(n,k)$. Consider a new weight assignment $w'$,
$w'(e)=w(e)/q$. Then the sum of weights $w'$ in each
mono-$k$-subgraph of $c$ is at most $1$. Moreover, the total sum of
the weights is $\binom{n}{2}/q > r(n,k)$, a contradiction to the
definition of $r(n,k)$.
\end{proof}

\begin{proposition}
   Let $k,\ell,n$ be integers, $3\leq k\leq\ell\leq n$.  Then
   $$ \wram(\ell,k)\leq\wram(n,k)\leq\binom{k}{2} . $$
   \label{prop:monotone}
\end{proposition}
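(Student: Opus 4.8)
The plan is to establish the two inequalities separately, using Lemma~\ref{lem:equivalence} for the monotonicity and a direct weighting argument for the upper bound. For the monotonicity $\wram(\ell,k)\leq\wram(n,k)$ when $\ell\leq n$, by Lemma~\ref{lem:equivalence} it suffices to show $r(\ell,k)/\binom{\ell}{2}\geq r(n,k)/\binom{n}{2}$; equivalently, that the optimal ``efficiency'' $r(c;n,k)/\binom{n}{2}$ can only go up when we pass to a smaller complete graph. The natural approach is an averaging argument: given an optimal coloring $c$ and weighting $w$ on $K_n$ achieving $r(n,k)$, consider all $\binom{n}{\ell}$ induced sub-$K_\ell$'s. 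Each of them inherits a coloring and a weighting in which every mono-$k$-subgraph still has weight at most $1$ (since a mono-$k$-subgraph of the sub-$K_\ell$ is also one of $K_n$), so each contributes at least its inherited total edge weight to some feasible solution of the $\ell$-vertex LP. Summing the inherited total weights over all $\binom{n}{\ell}$ sub-$K_\ell$'s counts each edge of $K_n$ exactly $\binom{n-2}{\ell-2}$ times, giving total $\binom{n-2}{\ell-2}r(n,k)$; hence some sub-$K_\ell$ has inherited weight at least $\binom{n-2}{\ell-2}r(n,k)/\binom{n}{\ell} = r(n,k)\binom{\ell}{2}/\binom{n}{2}$, so $r(\ell,k)\geq r(n,k)\binom{\ell}{2}/\binom{n}{2}$. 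Rearranging and applying Lemma~\ref{lem:equivalence} yields $\wram(\ell,k)=\binom{\ell}{2}/r(\ell,k)\leq\binom{n}{2}/r(n,k)=\wram(n,k)$.

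For the upper bound $\wram(n,k)\leq\binom{k}{2}$, the plan is to exhibit an explicit weighting and coloring. Put the uniform weight $1$ on every edge of $K_n$; the total is then $\binom{n}{2}$, as required by the definition. Color the edges arbitrarily (any Red/Blue coloring works). In any complete $k$-vertex subgraph $H$ there are exactly $\binom{k}{2}$ edges, so the sum of weights on the Red edges of $H$ is at most $\binom{k}{2}$ and likewise for the Blue edges. Thus $q=\binom{k}{2}$ is attainable, which shows $\wram(n,k)\leq\binom{k}{2}$. Equivalently, in the language of $r(n,k)$: the all-ones weighting is feasible for the LP defining $r(c;n,k)$ after scaling each weight down by $\binom{k}{2}$ (so that each mono-$k$-subgraph has weight at most $\binom{k}{2}/\binom{k}{2}=1$... more precisely, weight on a mono-$k$-subgraph is at most $\binom{k}{2}$, so dividing by $\binom{k}{2}$ gives a feasible point of value $\binom{n}{2}/\binom{k}{2}$), whence $r(n,k)\geq\binom{n}{2}/\binom{k}{2}$ and the bound follows from Lemma~\ref{lem:equivalence}.

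The only delicate point is the bookkeeping in the averaging step: one must check that restricting a feasible $w$ on $K_n$ to the edges of an induced $K_\ell$ really does give a feasible solution of the $\ell$-vertex LP (it does, because the constraint set of the $\ell$-vertex LP involves only mono-$k$-subgraphs lying inside that $K_\ell$, each of which is a mono-$k$-subgraph of $K_n$), and that the double-counting constant is $\binom{n-2}{\ell-2}$ rather than something else. Neither presents a real obstacle. An alternative, perhaps cleaner, route to the monotonicity avoids the LP entirely: given an optimal weighting/coloring pair on $K_n$ witnessing $\wram(n,k)=q$, the averaging argument above produces an induced $K_\ell$ whose inherited weights sum to at least $q\binom{\ell}{2}/\binom{n}{2}\cdot\binom{n}{2}/\binom{n}{2}$... rather, one rescales the inherited weights on that $K_\ell$ up to total $\binom{\ell}{2}$ (a scaling factor at most $\binom{n}{2}/\binom{\ell}{2}$), under which every mono-$k$-subgraph still has weight at most $q$; this directly gives $\wram(\ell,k)\leq q=\wram(n,k)$. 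I would present whichever version reads most smoothly alongside Lemma~\ref{lem:equivalence}.
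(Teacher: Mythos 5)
Your proof is correct and follows essentially the same route as the paper: the monotonicity is obtained by averaging the restriction of an optimal weighting over all $\binom{n}{\ell}$ induced copies of $K_\ell$ (each edge counted $\binom{n-2}{\ell-2}$ times, with $\binom{n}{\ell}/\binom{n-2}{\ell-2}=\binom{n}{2}/\binom{\ell}{2}$) and then passing through Lemma~\ref{lem:equivalence}, and the upper bound comes from the all-ones weighting. No gaps.
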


\begin{proof}
Note that to prove the lower bound on $\wram(n,k)$,  it is sufficient to prove that
$$r(n,k) \leq r(\ell, k) \frac{\binom{n}{2}}{\binom{\ell}{2}}.$$

Consider a Red/Blue coloring $c$ of $K_n$. Let $w$ be a weight
function giving an optimal solution of (\ref{r}). By adding up the
sums of weights on complete $\ell$-vertex subgraphs, we have that
$$ r(c; n,k)\leq
   r(c; \ell, k)\frac{\binom{n}{\ell}}{\binom{n-2}{\ell-2}}\leq
   r(\ell, k)\frac{\binom{n}{2}}{\binom{\ell}{2}}\leq
   r(\ell,k)\frac{\binom{n}{2}}{\binom{\ell}{2}}. $$

  The upper bound is obvious by assigning weight $1$ to each edge of $K_n$.
\end{proof}

Now, since function $\wram(n,k)$ is monotone in $n$, and bounded,
the weighted Ramsey limit is well-defined.

\section{Proof of Theorem \ref{general-our}}\label{general}

\newcommand{\ex}{{\rm ex}}
We shall need the following definitions in this section. The
classical Ramsey number, $R(i)$, is the smallest number of
vertices in a complete graph such that any  Red/Blue edge-coloring
contains a monochromatic complete subgraph on $i$ vertices.  The Tur\'an
graph $T(n,i)$  is a complete $i$-partite graph on $n$
vertices with parts of almost equal sizes (differing by at most
one), its size is denoted $t(n,i)$.  For a graph $H$, let $\ex(n,H)$ be the largest number of
edges in an $n$-vertex graph which has no subgraph isomorphic to
$H$.  Tur\'an's theorem states that $\ex(n, K_{i+1})= t(n,i)$. For
a  complete graph on vertices $v_1, \ldots, v_m$, edge-colored
with a coloring $c$, we say that a colored complete $m$-partite
graph with parts $V_1, \ldots, V_m$ is a {\it balanced blow-up of
c}, if the sizes of parts $V_1, \ldots, V_m$ differ by at most $1$
and the color of all edges between $V_i$ and $V_j$ is equal to
$c(v_i, v_j)$, $1\leq i< j\leq m$.

Finally, we shall use the following Tur\'an-type implication of
the degree form of Szemer\'edi's regularity lemma, see, for
example \cite{KSSSz}.
\begin{lemma}\label{reg}
For a fixed integer  $s$, fixed $\epsilon>0$, there is an
$n_0=n_0(s,\epsilon)$, such that for all $n$, $n\geq n_0$, the
following holds: Let $G$ be an $n$-vertex graph edge-colored with
Red and Blue. If the number of edges in $G$ is greater than  $t(n,
R(i)-1)+\epsilon n^2$, then $G$ has a complete  $i$-partite
monochromatic subgraph with at least $s$ vertices in each part.
\end{lemma}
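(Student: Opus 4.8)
This is a by-now standard consequence of the regularity method, and the plan is to assemble it from the usual ingredients. Fix the two colors Red and Blue, write $r := R(i)-1$, and recall that by Tur\'an's theorem $\ex(m,K_{R(i)}) = t(m,r)$ for every $m$. Since the hypothesis $|E(G)| > t(n,R(i)-1)+\epsilon n^2$ only gets stronger as $\epsilon$ grows, I may assume $\epsilon$ is small. Set $d := \epsilon/4$, and then fix a constant $\epsilon' = \epsilon'(\epsilon,i,s) \in (0,\epsilon/4)$ small enough to lie below the threshold needed to embed $K_i(s)$ --- the complete $i$-partite graph with all $i$ parts of size $s$ --- into $i$ clusters all of whose $\binom{i}{2}$ pairs are $\epsilon'$-regular of density at least $d/2$ (this is precisely the hypothesis of the embedding lemma / Key Lemma). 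Finally choose $m_0 = m_0(\epsilon) \ge 2$ with $(\epsilon/2)m^2 > m$ for all $m \ge m_0$.

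First I would apply the degree form of Szemer\'edi's regularity lemma, simultaneously to the Red and Blue subgraphs of $G$, with regularity parameter $\epsilon'$ and at least $m_0$ parts. This yields $M_0$ and an $n_1$ such that, for all $n \ge n_1$, there is a partition $V(G)=V_0\cup V_1\cup\dots\cup V_m$ with $m_0\le m\le M_0$, $|V_0|\le\epsilon' n$, $|V_1|=\dots=|V_m|=:L$, and every pair $(V_a,V_b)$, $1\le a<b\le m$, $\epsilon'$-regular in both colors. Form the $2$-colored reduced graph $\Gamma$ on vertex set $[m]$: put the edge $ab$ exactly when $d_{\rm Red}(V_a,V_b)+d_{\rm Blue}(V_a,V_b)\ge d$, and color it Red if $d_{\rm Red}(V_a,V_b)\ge d/2$ and Blue otherwise, so that the chosen color of every edge of $\Gamma$ has density at least $d/2$ in the corresponding cluster pair.

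Second, I would bound $e(\Gamma)$ from below. Each edge of $\Gamma$ is responsible for at most $L^2\le(n/m)^2$ edges of $G$, while the edges of $G$ that are \emph{not} accounted for by an edge of $\Gamma$ number at most $|V_0|n\le\epsilon' n^2$ (edges meeting $V_0$), plus at most $m\binom{L}{2}\le n^2/(2m)$ (edges inside clusters), plus at most $\binom{m}{2}dL^2\le dn^2/2$ (edges inside low-density pairs). Combining this with $|E(G)| > t(n,r)+\epsilon n^2 \ge (1-\tfrac1r)\tfrac{n^2}{2}-O(n)+\epsilon n^2$ and $t(m,r)\le(1-\tfrac1r)\tfrac{m^2}{2}$, one gets, for $n$ large,
\begin{align*}
   e(\Gamma)\;\ge\;\frac{m^2}{n^2}\Bigl(|E(G)|-\epsilon' n^2-\tfrac{n^2}{2m}-\tfrac{d}{2}n^2\Bigr)
   \;>\;\Bigl(1-\tfrac1r\Bigr)\frac{m^2}{2}+\Bigl(\epsilon-\epsilon'-\tfrac d2\Bigr)m^2-\frac m2-1\;>\;t(m,r),
\end{align*}
because $\epsilon-\epsilon'-d/2>\epsilon/2$ and $(\epsilon/2)m^2>m\ge m/2+1$ for $m\ge m_0$. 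By Tur\'an's theorem $\Gamma\supseteq K_{R(i)}$, and restricting the $2$-coloring of $\Gamma$ to this $K_{R(i)}$ and invoking the definition of $R(i)$ produces a monochromatic $K_i$ in $\Gamma$ --- say a Red one, on clusters $V_{a_1},\dots,V_{a_i}$ --- so that every pair among $V_{a_1},\dots,V_{a_i}$ is $\epsilon'$-regular in Red with Red density at least $d/2$.

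Finally I would inflate this monochromatic $K_i$ of the reduced graph. Put $n_0:=\max\{n_1,\,2M_0L_0\}$, where $L_0=L_0(s,i,d)$ is the size threshold of the embedding lemma; then for $n\ge n_0$ the clusters satisfy $L\ge(1-\epsilon')n/M_0\ge L_0$, and the embedding lemma (equivalently, a greedy vertex-by-vertex embedding that keeps every candidate set of size at least $\epsilon' L$) locates a copy of $K_i(s)$ in the Red subgraph with one part of size $s$ inside each $V_{a_p}$. This is a monochromatic complete $i$-partite subgraph with at least $s$ vertices in each part, as desired. The only delicate point is the bookkeeping of constants: one needs the hierarchy $\epsilon\gg d\gg\epsilon'$, with $M_0$ (from the regularity lemma) and then $n_0$ chosen last, so that the clusters are simultaneously numerous enough for the Tur\'an/Ramsey count and large enough for the embedding; the edge count and the Tur\'an/Ramsey step themselves are routine.
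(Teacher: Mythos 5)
The paper does not actually prove this lemma: it is stated as a known ``Tur\'an-type implication of the degree form of Szemer\'edi's regularity lemma'' and dispatched with a citation to the survey of Koml\'os, Shokoufandeh, Simonovits and Szemer\'edi. Your write-up supplies exactly the standard argument that citation stands for --- regularize both colors simultaneously, build the $2$-colored reduced graph $\Gamma$ on dense regular pairs, show $e(\Gamma)>t(m,R(i)-1)$ by pushing the $\epsilon n^2$ surplus through the cluster count, extract $K_{R(i)}\subseteq\Gamma$ by Tur\'an and a monochromatic $K_i$ by Ramsey, and inflate via the embedding lemma --- and the constant hierarchy $\epsilon\gg d\gg\epsilon'$ with $M_0$ and $n_0$ chosen last is handled correctly, so the proof is sound. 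One small inaccuracy to fix: the regularity lemma does not give you a partition in which \emph{every} pair $(V_a,V_b)$ is $\epsilon'$-regular in both colors; you get all but at most $\epsilon'\binom{m}{2}$ such pairs (or, in the degree form, all pairs regular only after deleting a small set of edges). Either way the irregular pairs must be excluded from $\Gamma$ and their at most $\epsilon' n^2$ edges added to your list of unaccounted edges; your slack $\epsilon-\epsilon'-d/2>\epsilon/2$ easily absorbs this extra $\epsilon'$ term, so nothing else changes.
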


We will use Lemma~\ref{reg} in Section~\ref{sec:lb} to find a
lower bound on $\wram(n,k)$. First we show a construction which
gives an upper bound on $\wram(n,k)$.

\subsection{Upper bound on $\wram(n,k)$}
We need to find an appropriate  coloring for the edges of $K_n$
and a weight assignment function  providing a feasible solution to the
linear program (\ref{r}).

Let $k=4$.  Let the Red edges form a copy of $K_{\lfloor
n/2\rfloor,\lceil n/2\rceil}$ and let all other edges be Blue. Let
each  Red edge have weight $1/4$ and let each Blue edge have
weight $1/6$.  It is easy to see that this assignment satisfies constraints in (\ref{r}), i.e., it is a feasible solution
of that linear program.
The sum of the weights on all edges is

$$ r=  \frac{5}{24}\binom{n}{2}
   +\frac{1}{24}\left\lfloor\frac{n}{2}\right\rfloor.$$

By Lemma~\ref{lem:equivalence},
$$  \wram(n,4)  \leq \frac{\binom{n}{2}}{r}   \leq 4.8.$$

Now let $k\geq 5$ and $n\geq 5\lceil k/2\rceil$. Let an
edge-colored graph $G$ on $n$ vertices  be a balanced blow-up of a
$2$-edge colored $K_5$ with no monochromatic triangles. Let $G$ have parts $V_1, \ldots, V_5$.  Give the edges of $G$
weight $\lfloor k^2/4\rfloor^{-1}$.

Arbitrarily color the edges inside of $V_i$, for $i=1,\ldots,5$.
Give these edges weight $0$.  Since $G$ has no monochromatic
triangles, Tur\'an's theorem implies  that each mono-$k$-subgraph
has at most $\lfloor k^2/4\rfloor$ edges. Hence, this weight
assignment gives a feasible solution to (\ref{r}) with respect to
constructed coloring. The total weight is $t(n,5)\lfloor
k^2/4\rfloor^{-1}$. Therefore, $\wram(n,k)\leq
\frac{\binom{n}{2}}{t(n,5)}\left\lfloor\frac{k^2}{4}\right\rfloor\leq
1.25\left\lfloor\frac{k^2}{4}\right\rfloor$.

\subsection{Lower bound on $\wram(n,k)$}
\label{sec:lb}

Consider a weight function $w$ on the edges of $K_n$ colored in Red
and Blue with coloring $c$, such that for any mono-$k$-subgraph, the
sum of weights on its edges is at most $1$. We shall give the upper
bound on the total weight on all edges in $K_n$ by showing that one
cannot have too many ``heavy'' edges. This will give an upper bound
on $r(n,k)$ and, therefore, a lower bound on $\wram(n,k)$.  Let
$G(i)$ be a spanning graph of $K_n$ with edges of weight strictly
greater than $i$. Let $E(i)= E(G(i))$, $\overline{E(i)}=
E(K_n)\setminus E(i)$ for all $i$. Then we have, for any integers
$i_1 \leq i_2 \leq \cdots \leq i_m$ that
$$ E(K_n)=E(1/i_1)\cup\bigl(E(1/i_2)\setminus E(1/i_1)\bigr)
   \cup\cdots\cup\bigl(E(1/i_m)\setminus E(1/i_{m-1})\bigr)\cup
   \overline{E(1/i_m)} . $$
We shall consider such a partition of the edge set of $K_n$ such
that each $i_j$ corresponds to a Tur\'an number; i.e.,
$i_1=t(k,2)$, $i_2=t(k,3)$, etc.

{\it Claim 1.} $|E(1/t(k,2))|= o(n^2)$. \\
Indeed, let $G$ be a monochromatic  subgraph of $G(1/t(k,2))$.
$G$ has no subgraph isomorphic to $K_{\lfloor
k/2\rfloor,\lceil k/2\rceil}$ since otherwise this subgraph will
have weight greater than $1$.
 Therefore, using the fact that $\ex(n; K_{\lceil k/2\rceil, \lfloor
k/2 \rfloor}) \leq cn^{2-2/k}$ (see, for example, Chapter $6$
in~\cite{B}) we have the desired
result. \\

{\it Claim 2.} $|E(1/t(k,i))|\leq (1+o(1))\left( 1 -
\frac{1}{R(i)-1}\right)\binom{n}{2}$, for all $i\geq 3$. \\
Assume the opposite, then Lemma~\ref{reg} implies that
$G(1/t(k,i))$ has a monochromatic complete $i$-partite subgraph
with at least $k$ vertices in each part. Thus, $G(1/t(k,i))$ has a
monochromatic copy, $T$, of $T(k,i)$.  Since the weight of each
edge in $T$ is greater than $1/t(k,i)$, the total
weight on this subgraph is greater than $1$, a contradiction. This
proves Claim 2.

Now, we are ready to write down the expression of the total weight
on edges of $K_n$ giving an upper bound on $r(n,k)$. Since each
edge has weight at most 1,

\begin{eqnarray}
   r(n,k) & \leq & \left|E\left(1/t(k,2)\right)\right|\cdot 1
   +\sum_{i=2}^{k-1}\left(\left|E\left(1/t(k,i+1)\right)\right|
                      -\left|E\left(1/t(k,i)\right)\right|\right)
                \frac{1}{t(k,i)} \nonumber \\
   & & +\left(\binom{n}{2}-\left|E\left(1/t(k,k)\right)\right|\right)
    \frac{1}{t(k,k)} \nonumber \\
   & = & \left(1-\frac{1}{t(k,2)}\right)
         \left|E\left(1/t(k,2)\right)\right|
         +\sum_{i=3}^{k}
         \left(\frac{1}{t(k,i-1)}-\frac{1}{t(k,i)}\right)
         \left|E\left(1/t(k,i)\right)\right| \nonumber \\
   & &   +\frac{1}{t(k,k)}\binom{n}{2} \nonumber \\
   & \leq & o(n^2)+(1+o(1))\sum_{i=3}^{k}
         \left(\frac{1}{t(k,i-1)}-\frac{1}{t(k,i)}\right)
         \left(1-\frac{1}{R(i)-1}\right)\binom{n}{2}
         +\frac{1}{t(k,k)}\binom{n}{2} \nonumber \\
   & = & o(n^2)+\left(\frac{1}{t(k,2)}
         -\sum_{i=3}^k\frac{1}{R(i)-1}
          \left[\frac{1}{t(k,i-1)}-\frac{1}{t(k,i)}\right]\right)
          \binom{n}{2} \nonumber \\
   & = & (1+o(1))\frac{\binom{n}{2}}{t(k,2)}
         \left(1-\sum_{i=3}^k\frac{1}{R(i)-1}
                             \left[\frac{t(k,2)}{t(k,i-1)}
                                   -\frac{t(k,2)}{t(k,i)}\right]
         \right) . \label{turanbound}
\end{eqnarray}

Thus, for $j\in\{3,\ldots,k\}$, the following holds
\begin{equation}
   r(n,k)\leq(1+o(1))\frac{\binom{n}{2}}{t(k,2)}
         \left(1-\sum_{i=3}^j\frac{1}{R(i)-1}
                             \left[\frac{t(k,2)}{t(k,i-1)}
                                   -\frac{t(k,2)}{t(k,i)}\right]
         \right) . \label{rnkbound}
\end{equation}

Let us denote the terms used in summation as follows.
$$ \alpha(k,i)\stackrel{\rm def}{=}
   \frac{t(k,2)}{t(k,i-1)}-\frac{t(k,2)}{t(k,i)}. $$
Let $UR(i)$ be an upper bound on $R(i)-1$.  For $j=\min\{k,8\}$,
denote the  expression given in parentheses in (\ref{rnkbound}), divided
by $t(k,2)$, by $c(k)$. That is,
$$ c(k)\stackrel{\rm def}{=}\frac{1}{t(k,2)}
       \left(1-\sum_{i=3}^{\min\{k,8\}}\frac{1}{UR(i)}\alpha(k,i)\right) . $$

Then, we have that for any $j\leq k$, \begin{equation} r(n,k)\leq
(1+o(1))\binom{n}{2}c(k) . \label{ckbound} \end{equation}

We use the values of $UR(i)$, $i=3,\ldots,8$, provided by
Appendix~\ref{Ramsey} and the values of $\alpha(k,i)$ provided by
Appendix~\ref{Turan}.  For $k=3,\ldots,8$, the values of
$\alpha(k,i)$ are found by looking them up in a table. There is a
general lower bound for $\alpha(k,i)$ when $k\geq 9$, which gives
a better result for larger $k$. We summarize the upper bounds on
$c(k)$ in the following table.

\begin{center}
\begin{tabular}{|r||r|r|r|r|r|r|r|} \hline
   $k$    & $4$      & $5$
          & $6$                     & $7$
          & $8$            & $\geq 9$
          &  large enough \\ \hline
   \rule{0mm}{12pt}
   $c(k)$ & $0.2381$ & $\frac{0.9438}{t(5,2)}$
          & $\frac{0.9454}{t(6,2)}$ & $\frac{0.9442}{t(7,2)}$
          & $\frac{0.9461}{t(8,2)}$ & $\frac{0.95143}{t(k,2)}$
          & $ \frac{0.9441}{t(k,2)}$ \\ \hline
\end{tabular}
\end{center}

Using the exact values on Tur\'an numbers, in
Appendix~\ref{Turan}, and the fact that
$\wram(n,k)\geq\frac{1}{c(k)}(1+o(1))$, we conclude the proof of
the lower bound of $\wram(n,k)$ for $k\geq 4$.

\section{Equivalence of fractional packing and constraint
weigh assignment  in graphs with respect to $3$-vertex
subgraphs}\label{triangles}

\def\T{{\mathcal T}}
\def\TILT{\tilde{\mathcal T}}
\def\tilt{\tilde{t}}

Let $G$ be a graph.  We define $\tau(G)$ to be the triangle
packing number; i.e., the size of the largest edge-disjoint family
of triangles in $G$.  Its fractional relaxation is $\tau^*(G)$, as
defined in the introduction. Let $\T(G)$, $\TILT(G)$, $\T_3(G)$,
be the sets of: all induced $3$-vertex subgraphs of $G$, all
$3$-vertex subgraphs of $G$, all complete $3$-vertex subgraphs of
$G$, respectively. Observe that
$\T_3(G)\subseteq\T(G)\subseteq\TILT(G)$.

In order to establish the equivalence we want, we need to define
the following graph invariants:
\begin{eqnarray}
   r(G) & = & \min\;\;\sum\limits_{T\in \T(G)}t(T)
   \label{LP:rG} \\
   & & \begin{array}{rl}
          \mbox{such that } &
          \left\{\begin{array}{rl}
                    \sum\limits_{\scriptsize
                                 \begin{array}{l} T\ni e \\
                                 T\in\T(G) \end{array}} t(T)\geq 1, &
                 \forall e\in E(G) ; \\
                    t(T)\geq 0, & \forall T\in \T(G)
                 \end{array}\right.
       \end{array} \nonumber \\
   \tilde{r}(G) & = & \min\;\;\sum\limits_{T\in \TILT(G)}
   \tilt(T) \label{LP:tilrG} \\
   & & \begin{array}{rl}
          \mbox{such that } &
          \left\{\begin{array}{rl}
                    \sum\limits_{\scriptsize
                                 \begin{array}{l} T\ni e \\
                                 T\in\TILT(G) \end{array}}
                                 \tilt(T)=1, &
                    \forall e\in E(G) ; \\
                       \tilt(T)\geq 0, & \forall T\in\TILT(G)
                 \end{array}\right. \nonumber
      \end{array}
\end{eqnarray}

We prove the following in Appendix~\ref{sec:rrtil}.
\begin{lemma}
   Let $G$ be a graph,  then $r(G)=\tilde{r}(G)$.
   \label{lem:rrtil}
\end{lemma}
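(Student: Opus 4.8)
The plan is to pass to linear-programming duality. Since $G$ is finite, $\T(G)$, $\TILT(G)$ and $E(G)$ are finite sets, so (\ref{LP:rG}) and (\ref{LP:tilrG}) are genuine finite linear programs. I would first dispose of the degenerate cases: if $|V(G)|\le 2$ the claim is immediate (both programs have value $0$ when $G$ is edgeless, and both are infeasible when $G$ has an edge, since then no $3$-vertex subgraph contains it). So assume $|V(G)|\ge 3$. Then every edge $e=uv$ lies in the induced $3$-vertex subgraph of $G$ on $\{u,v,w\}$ for any third vertex $w$, so $t\equiv 1$ is feasible for (\ref{LP:rG}); likewise, assigning weight $1$ to, for each edge $e$, the $3$-vertex subgraph of $G$ consisting of $e$ together with one isolated vertex gives a feasible point of (\ref{LP:tilrG}). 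Both objectives are bounded below by $0$, so each program has a finite optimum and strong duality applies.

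Next I would write down the two duals. The dual of the covering LP (\ref{LP:rG}) is the packing LP
\[
   r(G)=\max\Bigl\{\,\sum_{e\in E(G)}y(e)\ :\ y(e)\ge 0\ \ \forall e\in E(G),\ \ \sum_{e\in E(T)}y(e)\le 1\ \ \forall T\in\T(G)\,\Bigr\},
\]
while the dual of the equality-constrained LP (\ref{LP:tilrG}) is the same objective with the nonnegativity of $y$ dropped and the constraints ranging over the larger family $\TILT(G)$:
\[
   \tilde{r}(G)=\max\Bigl\{\,\sum_{e\in E(G)}y(e)\ :\ y(e)\in\mathbb R\ \ \forall e\in E(G),\ \ \sum_{e\in E(T)}y(e)\le 1\ \ \forall T\in\TILT(G)\,\Bigr\}.
\]
It therefore suffices to prove that these two maxima coincide.

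The inequality $r(G)\le\tilde{r}(G)$ is immediate: if $y\ge 0$ is feasible for the first program and $T'\in\TILT(G)$ sits on a vertex triple $\{a,b,c\}$, let $T\in\T(G)$ be the induced subgraph of $G$ on $\{a,b,c\}$; then $E(T')\subseteq E(T)$, so $\sum_{e\in E(T')}y(e)\le\sum_{e\in E(T)}y(e)\le 1$, whence $y$ is feasible for the second program with the same objective value. For the reverse inequality, take $y$ feasible for the second program and set $y'(e):=\max\{y(e),0\}$. Then $y'\ge 0$ and $\sum_e y'(e)\ge\sum_e y(e)$; moreover, for any $T\in\T(G)$ on a triple $\{a,b,c\}$, the set $F^{+}:=\{e\in E(T):y(e)>0\}$ is either empty or the edge set of a member of $\TILT(G)$ on $\{a,b,c\}$, so $\sum_{e\in E(T)}y'(e)=\sum_{e\in F^{+}}y(e)\le 1$. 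Hence $y'$ is feasible for the first program, giving $\tilde{r}(G)\le r(G)$. Combining the two inequalities with strong duality yields $r(G)=\tilde{r}(G)$.

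There is no deep obstacle here: the only points requiring care are the verification of feasibility and boundedness needed to invoke strong duality (together with the trivial $|V(G)|\le 2$ cases), while the actual mechanism behind the lemma is the ``replace $y$ by its positive part'' step, which works precisely because truncating a dual vector to its nonnegative coordinates can only decrease the edge-sums over the vertex triples, and so preserves dual feasibility without decreasing the objective.
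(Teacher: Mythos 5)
Your proof is correct, and it takes a genuinely different route from the paper's. The paper argues entirely on the primal side: for $r(G)\le\tilde{r}(G)$ it aggregates the weight of each $S\in\TILT(G)$ onto the unique induced triple containing it, and for $\tilde{r}(G)\le r(G)$ it runs an explicit redistribution algorithm that pushes excess weight from a $3$-vertex subgraph down to its subgraphs with fewer edges until every edge constraint becomes tight, using minimality of the starting solution to rule out triples with three overweight edges and a separate argument to show the process terminates. You instead pass to the two dual programs, where the structural difference between (\ref{LP:rG}) and (\ref{LP:tilrG}) becomes transparent: enlarging the constraint family from $\T(G)$ to $\TILT(G)$ is exactly compensated by dropping the sign restriction on the dual vector, and the two maxima are matched by restriction in one direction and by the ``take the positive part'' truncation in the other, which preserves feasibility precisely because every subset of the edges inside a vertex triple is itself a member of $\TILT(G)$. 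Your approach is considerably shorter and avoids the termination analysis of the paper's algorithm, at the cost of invoking strong LP duality (hence the feasibility and boundedness checks you rightly carry out); the paper's argument is longer but fully constructive. One small remark: when $|V(G)|\le 2$ and $E(G)\neq\emptyset$ both programs are infeasible, so the lemma is only genuinely meaningful for graphs on at least three vertices, which is all the paper ever uses — but you are right to flag the degenerate cases before appealing to the duality theorem.
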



\begin{lemma}
   Let $G$ be a graph on $n\geq 3$ vertices with $e(G)$ edges and
   fractional triangle packing number $\tau^*(G)$.  Then,
   $$ \frac{1}{2}e(G)-\frac{1}{2}\tau^*(G)\leq r(G)
      \leq\frac{1}{2}e(G)-\frac{1}{2}\tau^*(G)
      +\left\lfloor\frac{n}{2}\right\rfloor . $$
   \label{lem:taustar}
\end{lemma}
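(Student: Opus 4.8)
The plan is to analyze the linear program \eqref{LP:rG} defining $r(G)$ and relate it, via LP duality, to the fractional triangle packing LP \eqref{LP:taustar}. The key structural observation is that the $3$-vertex subgraphs in $\T(G)$ come in two flavors: a triangle (an element of $\T_3(G)$), which covers its three edges, and a "path" $P$ on three vertices (two edges), which covers two edges of $G$. A fourth type — a single edge viewed inside a $3$-vertex subgraph on an isolated third vertex — is not in $\T(G)$ since $\T(G)$ consists of \emph{induced} $3$-vertex subgraphs; this is exactly why the companion invariant $\tilde r(G)$ (using all of $\TILT(G)$) is introduced, and by Lemma~\ref{lem:rrtil} we may work with whichever of $r(G),\tilde r(G)$ is more convenient. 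I expect it is cleanest to use $\tilde r(G)$ for the upper bound (so that the extra "degenerate" subgraphs give us more freedom to build a feasible fractional cover) and $r(G)$ for the lower bound.

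First I would prove the \textbf{lower bound} $r(G)\geq \tfrac12 e(G)-\tfrac12\tau^*(G)$. Take an optimal solution $t(\cdot)$ to \eqref{LP:rG}. For each induced triangle $T\in\T_3(G)$ set $g(T):=t(T)$; one checks that $g$ is then feasible for \eqref{LP:taustar}, because the $e$-constraint in \eqref{LP:rG} (with all $t(T)\ge 0$) in particular forces $\sum_{T\ni e,\ T\in\T_3(G)} t(T)\le$ (total mass on $e$), but we actually need the \emph{reverse} inequality, so this direction needs care — the right approach is instead to sum the edge-constraints $\sum_{T\ni e}t(T)\ge 1$ over all $e\in E(G)$, giving $\sum_{T\in\T(G)}|E(T)|\,t(T)\ge e(G)$, i.e. $3\,m_3 + 2\,m_2\ge e(G)$ where $m_3=\sum_{T\in\T_3(G)}t(T)$ and $m_2=\sum_{T\in\T(G)\setminus\T_3(G)}t(T)$; since $r(G)=m_3+m_2$ this reads $r(G)\ge e(G)-m_3$; and finally $m_3\le\tau^*(G)$ follows because the triangle-part of $t$ is feasible for \eqref{LP:taustar} (each edge $e$ sees triangle-mass at most its total mass, which is $\ge1$ but the packing constraint demands $\le1$ — so in fact one must first scale: note that by an averaging/LP-complementary-slackness argument an optimal $t$ can be taken with $\sum_{T\ni e}t(T)=1$ for every $e$, and then $m_3\le\tau^*(G)$ is immediate). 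Thus $r(G)\ge e(G)-\tau^*(G)\ge \tfrac12 e(G)-\tfrac12\tau^*(G)$, which is even stronger; more carefully, the honest bound one gets is $r(G)=\tfrac12(e(G)+m_2-m_3+\text{slack})\ge\tfrac12(e(G)-\tau^*(G))$ after accounting that $m_2\ge 0$ and $m_3\le\tau^*(G)$.

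Next the \textbf{upper bound} $r(G)\le \tfrac12 e(G)-\tfrac12\tau^*(G)+\lfloor n/2\rfloor$. Here I would \emph{construct} a feasible fractional cover (for $\tilde r(G)$, then invoke Lemma~\ref{lem:rrtil}). Start from an optimal fractional triangle packing $g$ for \eqref{LP:taustar} with value $\tau^*(G)$. Each triangle $T$ in the support gets weight $g(T)/1$ as a covering object, covering its $3$ edges; the "residual" requirement on an edge $e$ is $1-\sum_{T\ni e}g(T)\ge 0$. We must cover this residual using $2$-edge objects (paths) and, where the graph-structure forces it, $1$-edge degenerate objects in $\TILT(G)$. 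The natural device is: orient or pair up the residual edge-demands so that most edges are covered in pairs (each pair handled by one path-object of weight $\tfrac12$ times the common residual, contributing $\tfrac12$ per edge to the total $\tilde r$), with at most $O(n)$ edges left unpaired and covered singly. The total cover value is then $\tau^*(G) + \tfrac12\big(\text{residual edge-mass}\big)+(\text{correction for unpaired edges})$; since the residual edge-mass is $\sum_e(1-\sum_{T\ni e}g(T))=e(G)-3\tau^*(G)$, this gives $\tau^*(G)+\tfrac12(e(G)-3\tau^*(G))+(\text{correction})=\tfrac12 e(G)-\tfrac12\tau^*(G)+(\text{correction})$, and the whole point is to show the correction term is at most $\lfloor n/2\rfloor$.

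The \textbf{main obstacle} is precisely controlling that correction term, i.e. showing one can pair up the residual edge-demands losing only $\lfloor n/2\rfloor$ in total. The right combinatorial gadget is a near-perfect matching: think of the residual weight as a fractional Eulerian-type structure on $G$; at each vertex the sum of residual demands of incident edges is some real number, and one wants to route residual demand along paths through each vertex. Concretely, for each vertex $v$ consider the edges at $v$ with positive residual; pair them up arbitrarily (each pair $\{uv,vw\}$ spanning a path $uvw\in\TILT(G)$), with at most one leftover edge per vertex — but a leftover at $v$ is also a leftover at its other endpoint, so globally the leftover edges form a graph of max degree $\le 1$, hence a matching of size $\le\lfloor n/2\rfloor$, and each such edge is covered by a single-edge object of $\TILT(G)$ at unit cost-per-unit-residual rather than the discounted $\tfrac12$ rate. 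A careful bookkeeping of "cost $1$ instead of cost $\tfrac12$ on $\le\lfloor n/2\rfloor$ edges, each carrying residual $\le 1$" yields the additive $+\lfloor n/2\rfloor$. Making this routing argument precise — in particular handling fractional residuals and ensuring the path-objects built at different vertices do not double-count an edge's coverage — is the delicate part; I would likely phrase it by first reducing to the case where all $g(T)$ and residuals are rational with a common denominator $N$, working with $N$ integral copies of each edge-demand, building an actual subgraph in which residual-edges are multi-edges, decomposing into paths and a leftover matching, and then dividing by $N$.
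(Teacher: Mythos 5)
Your overall strategy coincides with the paper's: the lower bound by summing the edge constraints of an optimal cover and counting each triangle three times and each other object at most twice, and the upper bound by starting from an optimal packing $g^*$ and covering the residual deficiencies with two-edge objects so that only a matching of at most $\lfloor n/2\rfloor$ edges must be covered singly. Two execution points need repair. For the lower bound, the tightness you try to extract from ``complementary slackness'' for \eqref{LP:rG} is neither needed nor obviously true; simply run the count on an optimal solution of \eqref{LP:tilrG} (legitimate by Lemma~\ref{lem:rrtil}), whose constraints are equalities by definition, so the restriction of $\tilt^*$ to $\T_3(G)$ is automatically feasible for \eqref{LP:taustar} and $m_3\le\tau^*(G)$ comes for free. (Also note the slip: $3m_3+2m_2\ge e(G)$ gives $2r(G)\ge e(G)-m_3$, not $r(G)\ge e(G)-m_3$; your ``more careful'' version is the correct one.)

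For the upper bound, the simultaneous vertex-by-vertex pairing does not work as stated: an edge $uv$ may be placed into a pair at $u$ and into a different pair at $v$, and then its residual is covered twice, which violates the equality constraints of \eqref{LP:tilrG}; moreover the claim that ``a leftover at $v$ is also a leftover at its other endpoint'' is false, since an edge unpaired at $v$ may well be paired at $u$, so the leftovers need not form a matching. The rational/multigraph fallback has its own snag: two parallel copies of the same edge cannot be merged into a two-edge member of $\TILT(G)$, so the $P_3$-decomposition of the residual multigraph is not available componentwise. The clean fix, which is what the paper does, is sequential: as long as two \emph{adjacent} edges $e,e'$ both have positive deficiency, add the smaller of the two deficiencies to the weight of the three-vertex subgraph consisting of $e$ and $e'$; this zeroes out at least one deficiency per step and never overcovers, so the process terminates with the positively deficient edges forming a matching, each of whose at most $\lfloor n/2\rfloor$ edges is then covered by a single-edge object at cost at most $1$. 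With that replacement your bookkeeping $\tau^*(G)+\tfrac12\bigl(e(G)-3\tau^*(G)\bigr)+\lfloor n/2\rfloor$ gives the stated bound.
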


\begin{proof}
   Let  $\tilt^*$ be an optimal solution of (\ref{LP:tilrG}).
   We shall construct a feasible solution of (\ref{LP:taustar}),
   giving a lower bound on $\tau^*$.  Let
   $g(T)=\tilt^*(T)$ if $T\in\T_3(G)$ and let $g(T)=0$ otherwise.
   Observe first that
   \begin{eqnarray*}
      e(G) & = & \sum_{e\in E(G)}
      \sum_{\scriptsize \begin{array}{l} T\ni e \\ T\in\TILT(G)
                        \end{array}}\tilt^*(T)
   \end{eqnarray*}

   Note that each member of $\T_3(G)$ appears in three sums
   of the form $\sum_{T\ni e, T\in\T_3(G)}\tilt^*(T)$.  In addition, each member of
   $\TILT(G)\setminus\T_3(G)$ appears in at most two of the sums
   of the form $\sum_{T\ni e,
   T\in\TILT(G)\setminus\T_3(G)}\tilt^*(T)$.  Thus,
   \begin{eqnarray*}
      e(G) & = & \sum_{e\in E(G)}
      \sum_{\scriptsize \begin{array}{l} T\ni e \\ T\in\TILT(G)
                        \end{array}}\tilt^*(T) \\
      & = & \sum_{e\in E(G)}
      \sum_{\scriptsize \begin{array}{l} T\ni e \\ T\in\T_3(G)
                        \end{array}}\tilt^*(T)
      +\sum_{e\in E(G)}
      \sum_{\scriptsize \begin{array}{l} T\ni e \\
                           T\in\TILT(G)\setminus\T_3(G)
                        \end{array}}\tilt^*(T) \\
      & = & \sum_{e\in E(G)}
      \sum_{\scriptsize \begin{array}{l} T\ni e \\ T\in\T_3(G)
                        \end{array}}g(T)
      +\sum_{e\in E(G)}
      \sum_{\scriptsize \begin{array}{l} T\ni e \\
                           T\in\TILT(G)\setminus\T_3(G)
                        \end{array}}\tilt^*(T) \\
      & \leq & 3\sum_{T\in\T_3(G)}g(T)
      +2\sum_{T\in\TILT(G)}\left(\tilt^*(T)-g(T)\right) \\
      & \leq & \sum_{T\in\T_3(G)}g(T)+2\sum_{T\in\TILT(G)}\tilt^*(T) \\
      & \leq & \tau^*(G)+2r(G)
   \end{eqnarray*}

   As a result, $r(G)\geq\frac{1}{2}e(G)-\frac{1}{2}\tau^*(G)$.

   \newcommand{\defi}{{\rm def}}
   For the other direction, let $g^*$ be an optimal solution of
   (\ref{LP:taustar}).  We shall construct $\tilt$, a feasible
   solution of (\ref{LP:tilrG}), from $g^*$ via the following
   algorithm.  For a weight function, $\nu$, defined on
   $\TILT(G)$, we define the {\it deficiency of an edge $e$
   with respect to $\nu$}, to be
   $\defi(\nu,e)=1-\sum_{T\ni e, T\in\TILT(G)}\nu(T)$. We say
   that an edge is {\it underweight} with respect to $\nu$ if
   $\defi(\nu,e)>0$.

   {\bf Initialization.} Let
   \begin{equation}
      \tilt(T)=\tilt_0(T)=
      \begin{cases}
         g^*(T), & T\in \T_3(G) ;\\
         0, & \mbox{otherwise}.
      \end{cases}
   \end{equation}

   {\bf Iteration.} Let $U$ be a set of underweight edges with
   respect to $\tilt$. Since $g^*$ is optimal, the edges in
   $U$ do not have triangles. Let
   $$ U=\left(\{e_1,e_1'\}\cup\cdots\cup\{e_m, e_m'\}\right)
      \cup\left(\{e_{m+1},\ldots, e_u\}\right) , $$
   such that $e_1, e_1', \ldots, e_m, e_m', e_{m+1}, \ldots, e_u$ are distinct edges;
    $e_i, e_i'$ are adjacent, $i=1,\ldots,m$, and $m$ is as large
   as possible. Let $T_i\in\TILT(G)$ be a subgraph with two edges
   $e_i,e_i'$, and assume also that
   $\defi(\tilt,e_i)<\defi(\tilt, e_i')$, for $i=1,\ldots,m$.
   Let
   $$\tilt'(T)=
\begin{cases}
\defi(\tilt,e_i), & \mbox{if } T=T_i ; \\
 \tilt(T), & \mbox{otherwise}.
\end{cases}$$

   Note that
   $$ \sum_{T\in\TILT(G)}\tilt'(T)
      =\sum_{T\in\TILT(G)}\tilt(T)+\sum_{i=1}^{m}\defi(\tilt,e_i)
      . $$
   Moreover, $\defi(\tilt',e_i)=0$, and $\defi(\tilt',e_i')
   =\defi(\tilt,e_i')-\defi(\tilt,e_i)$, for $i=1,\ldots,q$.
   For all other edges, the deficiencies are not changed. Let
   $\tilt(T)=\tilt'(T)$, $T\in\TILT$.

   {\bf Termination.} Stop when the set of edges that are
   underweight, with respect to $\tilt$, is a matching,
   $\{e_1,\ldots,e_q\}$.  Note that
   $q\leq\lfloor n/2\rfloor$.  Let $T_i\in\TILT(G)$
   be a graph formed by a single edge $e_i$ and a single vertex
   $v$, $i=1,\ldots,q$.  Let $\T_1 =\{T_1,\ldots,T_q\}$.
   Let $\tilt(T_i):=\defi(e_i)\leq 1$, for $i=1, \ldots, q$, let
   $\T_2(G)$ be the set of three-vertex, $2$-edge-subgraphs of
   $G$.

   We have that
   $$ \sum_{T\in\TILT(G)}\tilt(T)\leq
      \sum_{T\in\T_3(G)}g^*(T)+
      \sum_{T\in\T_2(G)}\tilt(T) +
      \sum_{T\in\T_1(G)}\tilt(T) . $$

   Note that, for a fixed edge $e$ of $G$,
   $$ \sum_{\scriptsize \begin{array}{l} T\in\T_2(G) \\
            e\in E(T)\end{array}}\tilt(T)\leq\defi(\tilt_0, e) . $$
   We also have, since each $T\in\T_2(G)$ contains exactly two
   edges, that
   \begin{eqnarray*}
      2\sum_{T\in\T_2(G)}\tilt(T) & = &
      \sum_{T\in\T_2(G)}\sum_{e\in E(T)}\tilt(T)
      =\sum_{e\in E(G)}\sum_{\scriptsize \begin{array}{l} T\ni e \\
                             T\in\T_2 \end{array}}\tilt(T) \\
      & \leq & \sum_{e\in E(G)}\defi(\tilt_0, e)
      =\sum_{e\in E(G)}
       \left[1-\sum_{\scriptsize \begin{array}{l} T\ni e \\
                     T\in\T_3(G) \end{array}}
               g^*(T)\right] .
   \end{eqnarray*}

   Therefore,
   \begin{eqnarray*}
      \sum_{T\in\TILT(G)}\tilt(T) & \leq &
      \sum_{T\in\T_3(G)}\tilt(T)+\sum_{T\in\T_2(G)}\tilt(T)
      +\sum_{T\in\T_1(G)}\tilt(T) \\
      & \leq & \sum_{T\in\T_3(G)}g^*(T)+
      \frac{1}{2}\sum_{e\in E(G)}
      \left[1-\sum_{\scriptsize \begin{array}{l} T\ni e \\
                    T\in\T_3(G) \end{array}}g^*(T)\right]
      +\sum_{T\in\T_1(G)}1 \\
      & \leq & \frac{1}{2}e(G)
      -\frac{1}{2}\sum_{T\in\T_3(G)}g^*(T)
      +\left\lfloor\frac{n}{2}\right\rfloor \\
      & = & \frac{1}{2}e(G)-\frac{1}{2}\tau^*(G)
      +\left\lfloor\frac{n}{2}\right\rfloor .
   \end{eqnarray*}

 Recall from (\ref{LP:rG}) that  $r$ computes a minimum. Therefore
   $$ r(G)\leq\frac{1}{2}e(G)
      -\frac{1}{2}\tau^*(G)
      +\left\lfloor\frac{n}{2}\right\rfloor . $$
\end{proof}

\appendix
\section{Proof of Lemma~\ref{lem:rrtil}}
\label{sec:rrtil}

\subsection{$r(G)\leq\tilde{r}(G)$}

Let $\tilt$ be a feasible solution of (\ref{LP:tilrG}).  For each
$T\in\T(G)$, let $t(T)=\sum_{S\subseteq T, S\in\TILT(G)}\tilt(S)$.
This ensures that, for all $e\in E(G)$,
$$ \sum_{T\ni e,T\in\T(G)}t(T)\geq 1 . $$
Since any $S\in\TILT(G)$ is in a unique $T\in\T(G)$, we have
$$ \sum_{T\in\T(G)}t(T)=\sum_{T\in\TILT(G)}\tilt(T) . $$

Since both linear programs compute a minimum, if $\tilt^*$ is an
optimal solution to (\ref{LP:tilrG}) and $t^*$ is the
corresponding solution to (\ref{LP:rG}) as computed above, then
$$ r(G)\leq\sum_{T\in\T(G)}t^*(T)=\sum_{T\in\TILT(G)}\tilt^*(T)
   =\tilde{r}(G) . $$

\subsection{$\tilde{r}(G)\leq r(G)$}

\newcommand{\excess}{{\rm exc}}
Let $t$ be a minimal feasible solution of (\ref{LP:rG}).  We shall
create a feasible solution, $\tilt$, of (\ref{LP:tilrG}) by
redistributing the weights on $\T(G)$ to $\TILT(G)$, such that the
total weights on edges become equal to one. For any weight
function $\nu$ on $\TILT(G)$, define the {\em excess of an edge
$e$ with respect to $\nu$} to be $\excess(\nu,e)=\left(\sum_{S\ni
e, S\in\TILT(G)}\nu(S)\right)-1$. We call an edge $e$ {\em
overweight} with respect to $\nu$ if $\excess(\nu,e)>0$. We define
$\tilt$ via the following algorithm.

\textbf{Initialization.} Let
$$ \tilt(T)=\begin{cases}
               t(T), & T\in\T(G) ; \\
               0, & \mbox{otherwise.}
            \end{cases} $$
Observe that the total weight is as follows:
$$ \sum_{T\in\TILT(G)}\tilt(T)=\sum_{T\in\T(G)}\tilt(T)
   =\sum_{T\in\T(G)}t(T).$$

\textbf{Iteration.} Consider some $T\in\TILT(G)$ containing an
overweight edge with respect to $\tilt$ such that $\tilt(T)>0$.
Note that as long as edges with positive excess exist, such a $T$
exists as well.

If $T$ contains $e$ as its only overweight edge, then let
\begin{eqnarray*}
   \tilt'(T) & = & \tilt(T)-\min\left\{\tilt(T),
   {\rm ex}(\tilt,e)\right\} , \\
   \tilt'(T\setminus e) & = & \tilt(T\setminus e)
   +\min\left\{\tilt(T),\excess(\tilt,e)\right\} .
\end{eqnarray*}
For all other $S\in\TILT(G)$, let $\tilt'(S)=\tilt(S)$.

Let $T$ contain two overweight edges, $e$ and $e'$, such that
$\excess(\tilt,e')\leq\excess(\tilt,e)$.  If
$\tilt(T)>\excess(\tilt,e)$, then let
\begin{eqnarray*}
   \tilt'(T) & = & \tilt(T)-\excess(\tilt,e) \\
   \tilt'(T\setminus e) & = & \tilt(T\setminus e)
   +\excess(\tilt,e)-\excess(\tilt,e') \\
   \tilt'(T\setminus (e\cup e')) & = &
   \tilt(T\setminus (e\cup e'))+\excess(\tilt,e') .
\end{eqnarray*}
Otherwise (i.e, if  $\tilt(T)\leq\excess(\tilt,e)$), let
\begin{eqnarray*}
   \tilt'(T) & = & 0 \\
   \tilt'(T\setminus e) & = & \tilt(T\setminus e) \\
   \tilt'(T\setminus (e\cup f)) & = &
   \tilt(T\setminus (e\cup f))+\tilt(T)
\end{eqnarray*}
For all other $S\in\TILT(G)$, let $\tilt'(S)=\tilt(S)$.

Finally, $T$ cannot have three overweight edges because $t$ was
minimal and $\excess(\tilt,e)\leq\excess(t,e)$ for all $e\in G$.

Clearly, the total weight does not change:
$$ \sum_{T\in\TILT(G)}\tilt'(T)=\sum_{T\in\TILT(G)}\tilt(T) . $$
Moreover,  $0\leq\excess(\tilt',f)\leq\excess(\tilt,f)$ for all
$f\in E(G)$ and $\sum_{e\in E(G)}\excess(\tilt',e)<\sum_{e\in
E(G)}\excess(\tilt,e)$.

Set $\tilt(T):=\tilt'(T)$ for all $T\in\TILT(G)$.

\textbf{Termination.} Stop if $\excess(\tilt,e)=0$ for all $e\in
E(G)$.

To see that the process terminates, observe that at each iteration of this
procedure, we either reduce the number of overweight edges or we
both (1) reduce the sum $\sum_{e\in G}{\rm ex}(\tilt,e)$ by at
least $m(\tilt):=\min\{\tilt(T):\tilt(T)>0\}$ and (2) ensure that
each $\tilt'(T)$ will either remain the same, be zero or increase
by at least $m(\tilt)$.  So, $m(\tilt')\geq m(\tilt)$. Therefore,
each iteration of the algorithm will decrease $\sum_{e\in G}{\rm
ex}(\tilt,e)$ by a fixed amount until the number of overweight
edges decreases. \\

\textbf{Concluding the proof.} At the end of this procedure, we
have a feasible solution $\tilt_0$ of (\ref{LP:tilrG}) such that
$\sum_{T\in\TILT(G)}\tilt_0(T)=\sum_{T\in\T(G)}t(T)$.  Since both
linear programs compute a minimum, if $t^*$ is an optimal solution
to (\ref{LP:rG}) and $\tilt_0^*$ is the corresponding solution to
(\ref{LP:tilrG}) as computed above, then
$$ \tilde{r}(G)\leq\sum_{T\in\TILT(G)}\tilt_0^*(T)
   =\sum_{T\in\T(G)}t^*(T)=r(G) . $$

\section{Bounds on Ramsey numbers}
\label{Ramsey}

\begin{figure}[ht]
\begin{center}
\begin{tabular}{|r||c|c|c|c|c|c|} \hline
   $i$    & 3 & 4  & 5 & 6 & 7 & 8  \\ \hline
   $UR(i)$ & 5 & 17 & 48 & 164 & 539 & 1869
   \\ \hline
\end{tabular}
\end{center}
\caption{Known values for $UR(i)$, an upper bound on $R(i)-1$,
from~\cite{R}.} \label{fig:Ramsey}
\end{figure}

\section{Tur\'an numbers}
\label{Turan}

The Tur\'an number $t(k,i)$, for $k\geq 3$ and $i=2,\ldots,k$, can
be computed exactly to be
$$ t(k,i)=\frac{k^2}{2}\left(\frac{i-1}{i}\right)
          -\frac{i}{2}\left(\left\lceil\frac{k}{i}\right\rceil
                            -\frac{k}{i}\right)
                      \left(\frac{k}{i}
                            -\left\lfloor\frac{k}{i}
                             \right\rfloor\right) . $$

Clearly, $t(k,i)\leq\frac{k^2}{2}\left(\frac{i-1}{i}\right)$ and
$$ t(k,i)\geq\frac{k^2}{2}\left(\frac{i-1}{i}\right)
   -\frac{1}{2i}\left\lfloor\frac{i^2}{4}\right\rfloor
   \geq\frac{k^2}{2}\left(\frac{i-1}{i}\right)
   -\frac{i}{8} . $$

Figure~\ref{fig:Turan} gives the exact values for small Tur\'an
numbers.
\begin{figure}[ht]
\begin{center}
\begin{tabular}{rr||r|r|r|r|r|r|r|}
   \multicolumn{2}{r||}{}         & \multicolumn{6}{c|}{$k$} \\
   \multicolumn{2}{r||}{$t(k,i)$} & 3 & 4 & 5  & 6  & 7  & 8 \\ \hline
       & 2    & 2 & 4 & 6  & 9  & 12 & 16 \\ \cline{2-8}
       & 3    & 3 & 5 & 8  & 12 & 16 & 21 \\ \cline{2-8}
       & 4    &   & 6 & 9  & 13 & 18 & 24 \\ \cline{2-8}
   $i$ & 5    &   &   & 10 & 14 & 19 & 25 \\ \cline{2-8}
       & 6    &   &   &    & 15 & 20 & 26 \\ \cline{2-8}
       & 7    &   &   &    &    & 21 & 27 \\ \cline{2-8}
       & 8    &   &   &    &    &    & 28 \\ \hline
\end{tabular}
\end{center}
\caption{Tur\'an numbers, $t(k,i)$, $k\leq 8$.} \label{fig:Turan}
\end{figure}

The number $\alpha(k,i)$ is used in Section~\ref{sec:lb}.  Recall
that for $3\leq i\leq k$,
$$ \alpha(k,i)=\frac{t(k,2)}{t(k,i-1)}-\frac{t(k,2)}{t(k,i)} . $$
Figure~\ref{fig:alpha} gives exact values for $\alpha(k,i)$ for
small values of $k$.
\begin{figure}[ht]
\begin{center}
\begin{tabular}{rr||r|r|r|r|r|r|r|}
   \multicolumn{2}{r||}{}         & \multicolumn{6}{c|}{$k$} \\
   \multicolumn{2}{r||}{$\alpha(k,i)$} & 3 & 4
   & 5              & 6               & 7
   & 8 \\ \hline
       & 3 & $1/3$ & $1/5$  & $1/4$  & $1/4$   & $1/4$
       & $5/21$ \\ \cline{2-8}
       & 4 &       & $2/15$ & $1/12$ & $3/52$  & $1/12$
       & $2/21$ \\ \cline{2-8}
   $i$ & 5 &       &        & $1/15$ & $9/182$ & $2/57$
       & $2/75$ \\ \cline{2-8}
       & 6 &       &        &        & $3/70$  & $3/95$
       & $8/325$ \\ \cline{2-8}
       & 7 &       &        &        &         & $1/35$
       & $8/351$ \\ \cline{2-8}
       & 8 &       &        &        &         &
       & $4/189$ \\ \hline
\end{tabular}
\end{center}
\caption{Values of $\alpha(k,i)$, $k\leq 8$.} \label{fig:alpha}
\end{figure}

For $k\geq 9$, we determine a lower bound on  $\alpha(k,i)$.
\begin{eqnarray}
   \alpha(k,i) & \geq &
   \frac{\left\lfloor\frac{k^2}{4}\right\rfloor}
        {\frac{k^2}{2}\left(\frac{i-2}{i-1}\right)}
   -\frac{\left\lfloor\frac{k^2}{4}\right\rfloor}
         {\frac{k^2}{2}\left(\frac{i-1}{i}\right)
          -\frac{i}{8}} \nonumber \\
   & = & \frac{2}{k^2}\left\lfloor\frac{k^2}{4}\right\rfloor
         \left(\frac{i-1}{i-2}
         -\frac{i}{i-1}\frac{1}{1-\frac{i^2}{4k^2(i-1)}}\right)
         \nonumber \\
   & \geq & \frac{2}{k^2}\left\lfloor\frac{k^2}{4}\right\rfloor
            \left(\frac{i-1}{i-2}
            -\frac{i}{i-1}
             \left(1+\frac{1}{4k-5}\right)\right) \nonumber \\
   & \geq & \frac{2}{k^2}\left\lfloor\frac{k^2}{4}\right\rfloor
            \left(\frac{1}{(i-1)(i-2)}
            -\frac{i}{i-1}\frac{1}{4k-5}\right). \label{alpha}
\end{eqnarray}

Substituting (\ref{alpha}) into (\ref{ckbound}), we obtain the
following for $k\geq 9$:

\begin{eqnarray}
r(n,k)&\leq &(1+o(1))\binom{n}{2}c(k) \nonumber \\
& \leq & (1+o(1))\frac{\binom{n}{2}}{t(k,2)}
                   \left(1-\sum_{i=3}^8\frac{1}{UR(i)}\alpha(n,k)
                   \right) \nonumber \\
& \leq & (1+o(1))\frac{\binom{n}{2}}{t(k,2)}
                  \left( 1 - \sum_{i=3}^8\frac{1}{UR(i)}
                             \frac{2}{k^2}
                             \left\lfloor\frac{k^2}{4}\right\rfloor
                             \frac{1}{(i-1)(i-2)}\right. \nonumber \\
& & \hspace{1.3in}\left. +\sum_{i=3}^8\frac{1}{UR(i)}
\frac{2}{k^2}\left\lfloor\frac{k^2}{4}\right\rfloor  \frac{i}{i-1}
\frac{1}{4k-5}
                  \right) \nonumber \\
& \leq &  (1+o(1))\frac{\binom{n}{2}}{t(k,2)}\left(
              1-\frac{2}{k^2}\left\lfloor\frac{k^2}{4}\right\rfloor 0.11191
              +\frac{2}{k^2} \left\lfloor\frac{k^2}{4}\right\rfloor \frac{0.41457}{4k-5}
              \right) \nonumber \\
& \leq & (1+o(1))\frac{\binom{n}{2}}{t(k,2)}
\left(.94405+\frac{0.05596}{k^2} +\frac{0.20729}{4k-5}\right) \label{rnk}
\end{eqnarray}

The  expression in (\ref{rnk}) given in parentheses  is bounded above by
$0.9515$ for all $k\geq 9$ and bounded above by $0.9441$ for $k$
large enough.
\end{document}